\theoremstyle{plain}
\newtheorem{thm}{Theorem}
\newtheorem{lem}{Lemma}
\newtheorem{prb}{Problem}
\newtheorem*{cnj}{Conjecture}
\theoremstyle{definition}
\newcommand{\vll}{\operatorname{vol}}
\newcommand{\eps}{\varepsilon}
\newcommand{\mb}{\mathbf}
\newcommand{\cs}{c.s.\@ }
\newcommand{\hcp}{h.c.p.\@ }
\newcounter{asyfigcntr}
\begin{document}

\title{Pessimal packing shapes}

\author{Yoav Kallus}
\address{Yoav Kallus, Santa Fe Institute, 1399 Hyde Park Road, Santa Fe, New Mexico 87501}

\date{\today}

\begin{abstract}We address the question of which convex shapes, when packed as densely as possible under certain restrictions,
fill the least space and leave the most empty space.
In each different dimension and under each different set of restrictions, this question is expected
to have a different answer or perhaps no answer at all. As the problem of identifying global minima in most cases
appears to be beyond current reach, in this paper we focus on local minima. We review some known results
and prove these new results: in two dimensions, the regular heptagon is a local minimum of the double-lattice packing density,
and in three dimensions, the directional derivative (in the sense of Minkowski addition) of the double-lattice packing density
at the point in the space of shapes corresponding to the ball is in every direction positive. 
\end{abstract}

\maketitle

\section{Introduction}

An $n$-dimensional convex body is a convex, compact, subset of $\mathbb{R}^n$ with nonempty interior. 
The space of convex bodies, denoted $\mathcal{K}^n$, can be endowed with
the Hausdorff metric:
$$\operatorname{dist}(K,K') = \min \{\eps:K'\subseteq K_\eps\text{ and } K\subseteq K'_\eps\}\text,$$
where $K_\eps = \{\mb{x}+\mb{y}:\mb{x}\in K, ||\mb{y}||\le\eps\}$ is the $\eps$-parallel body
of $K$.

A set of isometries $\Xi$
is said to be admissible for $K$ if the interiors of $\xi(K)$ and $\xi'(K)$
are disjoint for all distinct $\xi,\xi'\in\Xi$. The {\it (lower) mean volume} of $\Xi$ can be defined
as $d(\Xi) = \liminf_{r\to\infty} (4\pi r^3/3) / |\{\xi\in\Xi : ||\xi(0)||<r\}|$.
The collection $\{\xi(K) : \xi \in \Xi\}$ for an admissible $\Xi$ is called a packing of $K$
and said to be produced by $\Xi$.
Its density is the fraction of space it fills: $\vll(K)/d(\Xi)$.
The packing density of a body $K$, denoted $\delta(K)$ is the supremum of $\vll(K)/d(\Xi)$ over all
admissible sets of isometries. Groemer proves some basic results about packing densities,
including the fact that the supremum is actually achieved by some packing and the fact that
$\delta(K)$ is continuous \cite{GroemerBasic}. Groemer's result apply also to the restricted
packing densities which we define below.

An inversion about a point $\mathbf{x}$ is the isometry $I_\mathbf{x}:\mathbf{y}\mapsto2\mathbf{x}-\mathbf{y}$.
While the group of isometries of $\mathbb{R}^n$ is not preserved under conjugation by 
an affine transformation of $\mathbb{R}^n$, the subgroup made of all translations and inversions about points is invariant.
It is interesting to consider packings
produced by sets of only translations and inversions. The supremum of $\vll(K)/d(\Xi)$ over 
packings restricted in this way, denoted $\delta_{T^*}(K)$, is preserved
under affine transformations of $K$ and therefore we may say that the domain of
$\delta_{T^*}(K)$ is the space of affine equivalence classes of convex bodies.
Macbeath showed that this space (with the induced topology) is compact \cite{Macbeath},
and so $\delta_{T^*}$ achieves a global minimum.

Similarly, we may restrict to packings produced only by a set of translations, produced only
by the set of elements of a group of translations (namely a Bravais lattice, or simply {\it lattice} hence), or produced only by
the set of elements of a group of translations and inversions (namely a {\it double lattice},
after G. Kuperberg and W. Kuperberg \cite{KuperbergDouble}),
and define respectively $\delta_T(K)$, $\delta_L(K)$, and $\delta_{L^*}(K)$ in the obvious way.
By the same argument as for $\delta_{T^*}(K)$, all these functions must also achieve a
global minimum. The following problem has been suggested, for example by A. Bezdek and
W. Kuperberg \cite{BezKup} (but see also Ref. \cite{BraMosPac}):

\begin{prb}\label{TTsLLs}In $n$ dimensions, what are the minima of $\delta_T$, $\delta_{T^*}$, $\delta_L$,
and $\delta_{L^*}$? Which bodies achieve these minima?\end{prb}

F{\'a}ry showed that in two dimensions, triangles are the unique minimum of $\delta_L$ \cite{Fary,Courant}.
Also, due to a result of L. Fejes T{\'o}th, $\delta_L=\delta_T$ in two dimensions,
so triangles also minimize $\delta_T$ \cite{LFT2D,Rogers2D}.

A body $K$ is said to be centrally symmetric (c.s.) if there is a point
$\mathbf{x}$ such that $I_{\mathbf{x}}(K)=K$.
It is reasonable to restrict the functions $\delta_T$ and $\delta_L$
to the space $\mathcal K^n_0$ of \cs bodies and ask for the minima of these restricted functions,
since these bodies correspond to unit balls in finite-dimensional Banach spaces.
Therefore the following question is natural to ask:

\begin{prb}\label{cs}In $n$ dimensions, what are the minima of $\delta_T$ and $\delta_L$
among \cs bodies? Which bodies achieve these minima?\end{prb}

In two dimensions, Reinhardt conjectured that a certain smoothed octagon -- a regular octagon
whose corners are rounded off by arcs of hyperbolas -- minimizes $\delta_L$ \cite{Reinhardt,mahler98}.
Due to the same result of L. Fejes T{\'o}th, we have that $\delta(K)=\delta_T(K)=\delta_L(K)$ for \cs
bodies $K$ in two dimensions \cite{LFT2D}.

\begin{figure}
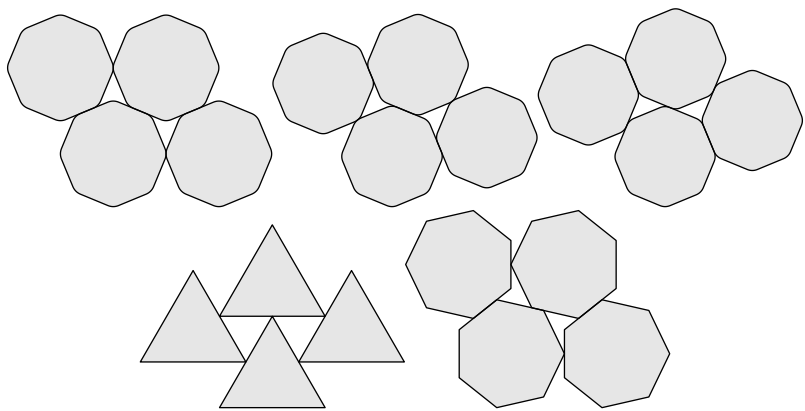
\begin{center}
\begin{asy}
import oct;
size(100);

path rad = (0,0)--(rotate(0)*(4,0));
pair a1 = IP(rad,oct);
real la1 = a1.x*a1.x + a1.y*a1.y;
pair c1 = rotate(90)*a1*d/la1;
path c2 = c1--(c1+a1);
path c3 = c1--(c1-a1);
pair a2 = IP(c2,oct);
pair a3 = IP(c3,oct);

fill(oct,lightgray); fill(shift(2*a1)*oct,lightgray); fill(shift(2*a2)*oct,lightgray); fill(shift(2*a3)*oct,lightgray);
D(oct); D(shift(2*a1)*oct); D(shift(2*a2)*oct); D(shift(2*a3)*oct);
\end{asy}
\begin{asy}
import oct;
size(100);

path rad = (0,0)--(rotate(11.25)*(4,0));
pair a1 = IP(rad,oct);
real la1 = a1.x*a1.x + a1.y*a1.y;
pair c1 = rotate(90)*a1*d/la1;
path c2 = c1--(c1+a1);
path c3 = c1--(c1-a1);
pair a2 = IP(c2,oct);
pair a3 = IP(c3,oct);

fill(oct,lightgray); fill(shift(2*a1)*oct,lightgray); fill(shift(2*a2)*oct,lightgray); fill(shift(2*a3)*oct,lightgray);
D(oct); D(shift(2*a1)*oct); D(shift(2*a2)*oct); D(shift(2*a3)*oct);
\end{asy}
\begin{asy}
import oct;
size(100);

path rad = (0,0)--(rotate(22.5)*(4,0));
pair a1 = IP(rad,oct);
real la1 = a1.x*a1.x + a1.y*a1.y;
pair c1 = rotate(90)*a1*d/la1;
path c2 = c1--(c1+a1);
path c3 = c1--(c1-a1);
pair a2 = IP(c2,oct);
pair a3 = IP(c3,oct);

fill(oct,lightgray); fill(shift(2*a1)*oct,lightgray); fill(shift(2*a2)*oct,lightgray); fill(shift(2*a3)*oct,lightgray);
D(oct); D(shift(2*a1)*oct); D(shift(2*a2)*oct); D(shift(2*a3)*oct);
\end{asy}
\linebreak
\begin{asy}
import cseblack;
import olympiad;
usepackage("amssymb");
size(100);

path T=(0,0)--(1,0)--(1./2.,sqrt(3.)/2.)--cycle;
pair a1 = (3./4.,sqrt(3.)/4.);
pair a2 = (0,sqrt(3.)/2.);
pair a3 = (-3./4.,sqrt(3.)/4.);
fill(T,lightgray); fill(shift(a1)*T,lightgray); fill(shift(a2)*T,lightgray); fill(shift(a3)*T,lightgray);
D(T); D(shift(a1)*T); D(shift(a2)*T); D(shift(a3)*T);
\end{asy}
\begin{asy}
import hept;
size(100);

pair a1 = M0-K0;
pair a2 = 2*P1;
pair a3 = 2*P2;
fill(M,lightgray); fill(shift(a1)*M,lightgray); fill(shift(a2)*rotate(180)*M,lightgray); fill(shift(a3)*rotate(180)*M,lightgray);
D(M); D(shift(a1)*M); D(shift(a2)*rotate(180)*M); D(shift(a3)*rotate(180)*M);
\end{asy}
\caption{\label{packfig}Densest packing structure of pessimal packing shapes. The Reinhardt octagon
has a one-parameter family of optimal lattices, each of which fills $0.90241\ldots$ of the plane,
which is conjectured to be less than is filled by the densest lattice packing of any other
\cs shape. The top row shows three examples from this family. The densest lattice packing of triangles
(bottom left) fills $2/3$ of the plane and is less dense than the densest lattice packing
of any other shape. The densest double-lattice packing of regular heptagons (bottom right)
fills $0.89269\ldots$ of the plane and is conjectured to be less dense than the densest
double-lattice packing of any other shape.}
\end{center}\end{figure}

By contrast to the functions considered in Problems 1--2, $\delta(K)$ is not invariant under
affinities, but only under isometries and dilations.
Therefore, its infimum over all bodies (which is bounded from below by the minimum of $\delta_{T^*}$)
is in theory not necessarily achieved by any particular body. In three dimensions, 
the claim that the ball is the minimum of $\delta(K)$ has come to be known
as Ulam's packing conjecture, due to a remark Gardner attributes to Ulam,
though there is no evidence to confirm that Ulam ever stated it as a conjecture
\cite{Gardner95}. More generally, it is natural to ask,

\begin{prb}\label{delta}In $n$ dimensions, what is the infimum of $\delta$?
Is this infimum achieved by some body?\end{prb}

So far, with the exception of the case of $\delta_T$ and $\delta_L$ in two dimensions
and the trivial case of one dimension, none of the problems 1--3 have been solved in any dimension.
There are two kinds of partial answers that have been successfully obtained:
lower bounds and local minima. In this paper we will focus on the results of the second kind and
content ourselves with a few references to results of the first kind \cite{KuperbergDouble,Smith,Ennola,Hlawka}.
Each of the problems 1--3 lends itself to a local variation: which bodies are a local minimum
of the function in question? In two dimensions, Nazarov showed that Reinhardt's smoothed
octagon is a local minimum of $\delta_L$ (and therefore also $\delta_T$) among \cs bodies \cite{Nazarov}.
In three dimensions, I showed that the ball is
a local minimum of $\delta_L$ among \cs bodies \cite{kalluspack}, and therefore also a local minimum of
$\delta_T$ and $\delta$ among \cs bodies, due to Hales's confirmation of Kepler's
conjecture \cite{HalesKepler}.

In this paper we show that the regular heptagon is a local minimum
of $\delta_{L^*}$. Also, failing to show that the three-dimensional ball is a local minimum
of $\delta$, we show that the directional derivative at the ball with
respect to Minkowski addition is positive in all directions.

\section{The regular heptagon}

Let $K$ be a two-dimensional convex body (hence, {\it domain}) of area $A$. We say that a chord
is an affine diameter of $K$ if it is at least as long as all parallel chords, and we call its length
the length of $K$ in its direction. We say an inscribed parallelogram
is a half-length parallelogram if one pair of sides is half the length of $K$
in the direction parallel to them. G. Kuperberg and W. Kuperberg have shown that in two dimensions $\delta_{L^*}(K)=A/2\Delta(K)$,
where $\Delta(K)$ is the area of the half-length parallelogram of least area inscribed in $K$ \cite{KuperbergDouble}.
They also show that $\delta_{L^*}(K)\ge\sqrt{3}/2=0.86602\ldots$ for all domains $K$ \cite{KuperbergDouble}.
Doheny shows that this bound is not sharp \cite{Doheny}. Here we show that the regular heptagon,
for which $\delta_{L^*}(M)=0.89269\ldots$ (exact value below), is a local minimum. It
is reasonable to conjecture that this is also a global minimum.

For definiteness, let us fix a regular heptagon $M$ with vertices $\mb{m}_i = R^i(1,0)$, $i=0,\ldots,6$
where $R^i$ is a counter-clockwise rotation by $2\pi i/7$ about the origin
(we understand the label $i$ to take values in $\mathbb{Z}/7\mathbb{Z}$).
The coordinates of the vertices are then in the field extension $\mathbb{Q}(u,v)$, where $u=\cos\pi/7$ and $v=\sin\pi/7$,
and we will give all explicit numbers below in the reduced form $a+bu+cu^2+v(d+eu+fu^2)$.
The least-area half-length parallelogram inscribed in $M$ (see Figure \ref{heptfig}) is the rectangle $\mb{p}_1\mb{p}_2\mb{p}_3\mb{p}_4$,
where $\mb{p}_1=(1-a)\mb{m}_1+a\mb{m}_2$, $\mb{p}_2=(1-b)\mb{m}_2+b\mb{m}_3$, $\mb{p}_3=(1-b)\mb{m}_5+b\mb{m}_4$,
$\mb{p}_4=(1-a)\mb{m}_6+a\mb{m}_5$, $a=\tfrac{7}{4}-2u^2$, and $b=-\tfrac{1}{2}+u^2$.
As the area of this rectangle is given by $\Delta=(-19 + 2u +56u^2)v/8$ and the area of the heptagon is given by $A=7uv$,
the double-lattice packing density of $M$ is $A/2\Delta = \tfrac{2}{97}(-111 + 492 u - 356 u^2) = 0.89269\ldots$.
This rectangle, of course is one of seven equivalent rectangles $R^i(\mb{p}_1\mb{p}_2\mb{p}_3\mb{p}_4)$, $i=0,\ldots,6$.

\begin{figure}
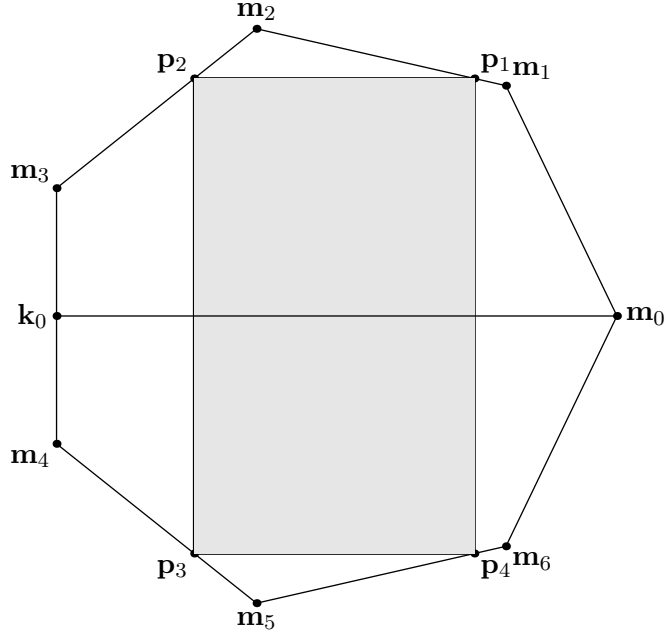
\begin{center}
\begin{asy}
import hept;
size(250);

D("\mathbf{m}_0",(1,0),E);
D("\mathbf{m}_1",rotate(360./7.)*M0,NE);
D("\mathbf{m}_2",rotate(360./7.)*M1,N);
D("\mathbf{m}_3",rotate(360./7.)*M2,NW);
D("\mathbf{m}_4",rotate(360./7.)*M3,SW);
D("\mathbf{m}_5",rotate(360./7.)*M4,S);
D("\mathbf{m}_6",rotate(360./7.)*M5,SE);

D("\mathbf{p}_1",(1-A)*M1 + A*M2,NE);
D("\mathbf{p}_2",(1-B)*M2 + B*M3,NW);
D("\mathbf{p}_3",(1-B)*M5 + B*M4,SW);
D("\mathbf{p}_4",(1-A)*M6 + A*M5,SE);

D("\mathbf{k}_0",M0+2*P2-2*P1,W);

D(M);

path P=D(P1--P2--P3--P4--cycle,linewidth(1));
fill(P,lightgray);
path L=D(M0--K0);
\end{asy}
\caption{\label{heptfig}The half-length parallelogram of least area inscribed in the regular heptagon.}
\end{center}\end{figure}

Let us now consider a different domain $M'$, with area $A'$ and least-area half-length parallelogram
of area $\Delta'$. We will be interested in the limit that $M'$ becomes more and more similar to $M$.
Therefore, let us assume that $(1-\eps)M\subseteq M'\subseteq(1+\eps)M$, and we will explore what happens
as we let $\eps$ approach $0$. We wish to prove that there exists $\eps>0$ such that $A'/2\Delta'\ge A/2\Delta$
for all $M'$. We will prove this in two steps: we first prove that $A'/2\Delta'\ge A/2\Delta$ if $M'$
is also a heptagon, and then we prove that $M'$ is a heptagon if $A'/2\Delta'\le A/2\Delta$.

\begin{thm}\label{heptthm} There exists $\eps>0$ such that if $M'$ is a heptagon and $(1-\eps)M\subseteq M'\subseteq(1+\eps)M$
then $A'/2\Delta'\ge A/2\Delta$, with equality only when $M'$ is affinely equivalent to $M$.\end{thm}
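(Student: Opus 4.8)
The plan is to reduce the statement to a finite-dimensional optimization problem and then analyze the Hessian of the resulting ratio at the heptagon. A nearby heptagon $M'$ is determined by $14$ real parameters (the seven vertices $\mb m_i' = \mb m_i + \mb t_i$ perturbing the regular heptagon), modulo the $6$-dimensional group of affine transformations, leaving an $8$-dimensional local moduli space. Since $\delta_{L^*}$ is affine-invariant, the ratio $A'/2\Delta'$ descends to a function on this quotient, and it suffices to show the regular heptagon is a strict local minimum there.

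The first main step is to get a usable formula for $\Delta'$. For $M'$ close to $M$, the least-area half-length parallelogram of $M'$ should be close to one of the seven rectangles $R^i(\mb p_1\mb p_2\mb p_3\mb p_4)$ (I would argue this by a compactness/continuity argument: the set of half-length parallelograms of $M$ achieving area within $\eps'$ of $\Delta$ is concentrated near those seven, using that $M$ is a polygon so affine diameters vary in a controlled piecewise way). By symmetry it is enough to track the one near $\mb p_1\mb p_2\mb p_3\mb p_4$: its four vertices slide along the (perturbed) edges of $M'$, subject to the half-length constraint — i.e., one pair of opposite sides has length equal to half the length of $M'$ in that direction — and we minimize the parallelogram's area over the remaining freedom. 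I would set this up with Lagrange multipliers or simply parametrize the constraint surface explicitly, obtaining $\Delta'$ as a smooth function of the $14$ vertex perturbations (at least to second order, which is all we need). The key subtlety here is that "length of $M'$ in a direction" is the max over parallel chords, so one must check which vertices realize the affine diameter near the relevant direction and confirm this combinatorial structure is stable under small perturbation — this is where being at a concrete, non-degenerate polygon is essential.

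With $A'$ and $\Delta'$ expressed as explicit functions of the perturbation parameters, the second step is the second-order analysis. I would compute the gradient of $f = A'/2\Delta'$ at $M$ and check it vanishes on the $8$-dimensional quotient (it must vanish along the $6$-dimensional affine orbit automatically; the content is the other two directions, which should follow from the symmetry of $M$ under the dihedral group $D_7$ — any first-order variation decomposes into irreducible representations, and the derivative of a $D_7$-invariant functional kills all non-trivial ones, while the trivial isotypic component corresponds to dilation, which leaves $f$ unchanged). Then I would compute the Hessian of $f$ restricted to the normal directions and show it is positive definite. Exploiting $D_7$-symmetry, the Hessian block-diagonalizes over the irreducible representations of $D_7$, reducing a potentially large matrix computation to a handful of small blocks (the trivial rep, the sign rep, and three $2$-dimensional reps), each of which can be checked positive-definite by an exact computation in $\mathbb Q(u,v)$.

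The hard part will be the stability-of-combinatorial-structure argument in step one: proving rigorously that for small enough $\eps$ the optimal half-length parallelogram of $M'$ stays in the expected combinatorial position (same edges carrying the four vertices, same vertices realizing the affine diameters), so that the smooth local formula for $\Delta'$ is actually valid and not just a lower bound from one candidate family. Once that is pinned down, the remaining work is a finite exact computation whose positivity is guaranteed to be decidable; the representation-theoretic block-diagonalization is what keeps it humane. A secondary technical point is handling the equality case — showing the Hessian is positive definite on the full $8$-dimensional quotient (not merely semidefinite), so that equality in $A'/2\Delta' \ge A/2\Delta$ forces $M'$ to lie on the affine orbit of $M$.
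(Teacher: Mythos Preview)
Your plan has a real gap at the point where you treat $f=A'/2\Delta'$ as a smooth function with a gradient and a Hessian. At the regular heptagon the least-area half-length parallelogram is not unique: the seven rotates $R^i(\mb p_1\mb p_2\mb p_3\mb p_4)$ all tie, so $\Delta'$ is the minimum of seven smooth branch functions $\Delta_0,\ldots,\Delta_6$ that coincide at $M$, and $A'/2\Delta'=\max_i A'/2\Delta_i$ is only piecewise smooth there. ``By symmetry it is enough to track the one near $\mb p_1\mb p_2\mb p_3\mb p_4$'' computes one branch $\Delta_0$, not $\Delta'$; which branch realises the minimum depends on the direction of the perturbation. In particular the individual gradients $\mb f_i=\nabla\big(\tfrac{A'/2\Delta_i}{A/2\Delta}-1\big)\big|_{M}$ are \emph{nonzero}, so your $D_7$ argument that ``the gradient vanishes'' cannot be about any single branch, and there is no Hessian of $f$ to block-diagonalise.

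What the paper does instead is to keep all seven smooth functions $\phi_i=\tfrac{A'/2\Delta_i}{A/2\Delta}-1$ in play. The hypothesis $A'/2\Delta'\le A/2\Delta$ forces $\phi_i\le 0$ for every $i$. The $D_7$ symmetry shows up as the linear relation $\sum_i\mb f_i=0$ with positive coefficients; by a Farkas-type argument this means $\langle\mb f_i,\mb x\rangle\le 0$ for all $i$ already forces $\langle\mb f_i,\mb x\rangle=0$ for all $i$, which cuts the $8$-dimensional quotient down to a $2$-dimensional subspace. Only on that $2$-dimensional kernel is a second-order analysis needed, and there one checks directly that each $PF_iP$ is positive definite. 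So the correct structure is ``first-order linear programming across seven branches, then a small Hessian check on the common kernel'', not ``gradient zero, full Hessian positive definite'' for a single invariant function. Your combinatorial-stability concern (that the four parallelogram vertices stay on the expected edges) is valid for each $\Delta_i$ separately and is the easy part; the hard part you missed is the branch-switching of the minimiser.
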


\begin{proof}
Let the vertices of $M'$ be $\mb{m}_i' = R^i (1+x_i,y_i)$, $i=0,\ldots,6$. 
Denote by $\mb{x}$ the vector $(x_0,y_0,x_1,\ldots,x_6,y_6)\in\mathbb{R}^{14}$.
By the affine invariance of the double-lattice packing density, we may assume
without loss of generality that $\mb{x}$ lies, say, in the 8-dimensional subspace $W\subseteq\mathbb{R}^{14}$, consisting
of all vectors such that $x_0=x_2=x_5=y_0=y_2=y_5=0$.
Note that $||\mb{x}||\le C \eps$ (here and below, we use $C$ and $c$
to denote constants, whose exact value is irrelevant to the argument and which may be different from line to line,
but have no implicit dependence on any variable).
We will assume that $A'/2\Delta'\le A/2\Delta$,
and show that we necessarily then have that $M'=M$.

Consider the altitude dropped from each vertex $\mb{m}_i'$ of $M'$ to the opposite edge $\mb m_{i+3}'\mb m_{i+4}'$
and label the point of intersection $\mb{k}_i'$.
The chord $\mb{m}_i'\mb{k}_i'$ is an affine diameter of $M'$. Consider also for each $i$,
the two chords parallel to $\mb{m}_i'\mb{k}_i'$ but of half its length,
and let the parallelogram formed by them be of area $\Delta_i$. Let $\phi_i = \tfrac{A'/2\Delta_i}{A/2\Delta}-1$.
By our assumption, $\phi_i\le 0$ for all $i$.

Consider $\phi_i$ as a function of $\mb{x}$. This function depends analytically
on $\mb{x}$ in a neighborhood of the origin.
Within this neighborhood, we may bound $\phi_i(\mathbf{x})$ using its Taylor series
$$\phi_i(\mathbf{x}) \ge \langle \mb{f}_i,\mb{x}\rangle + \tfrac12 \langle\mb{x}, F_i\mb{x}\rangle -C ||\mb{x}||^3\text,$$
where the explicit values of $\mb{f}_i$ and $F_i$ are given in Tables \ref{tabfgen} and \ref{tabFF1}.

We note that there exist coefficients $c_i>0$ such that $\sum_{i=0}^6 c_i\mb{f}_i=0$, namely $c_i=1$ for all $i$.
It follows from the fundamental theorem of linear algebra that $\langle \mb{f}_i,\mb{x}\rangle\le 0$
for all $i$ if and only if
\begin{equation}\langle \mb{f}_i,\mathbf{x}\rangle = 0\text{ for all }i\text.\label{ns}\end{equation}
The intersection of the space of solutions to \eqref{ns} with $W$
is the two-dimensional space spanned by the two vectors given in Table \ref{tabfgen}.
We denote the orthogonal projection to this space as $P$. Note that (by a compactness 
argument) $\langle \mb{f}_i,\mb{x}\rangle \ge c ||(1-P)\mb{x}||$ for at least one $i$, and so
it follows from the assumption that $\phi_i(\mb{x})\le0$ for all $i$
and the fact that $\phi_i\ge\langle \mb{f}_i,\mb{x}_i\rangle -C ||\mb{x}||^2$
that $||(1-P)\mb{x}|| \le C ||\mb{x}||^2$. Therefore we also have that
$$\phi_i(\mb{x}) \ge \langle\mb{f}_i,(1-P)\mb{x}\rangle + \tfrac12\langle\mb{x},P F_i P \mb{x}\rangle-C||\mb{x}||^3\text.$$
By direct calculation, we observe that $P F_i P$ is positive definite (when restricted to the image of $P$) for all $i$,
and so $\tfrac12 \langle\mb{x},P F_i P \mb{x}\rangle\ge c||P\mb{x}||^2$. Therefore,
$0\ge\phi_i\ge c||(1-P)\mb{x}||+c'||P\mb{x}||^2-C||\mb{x}||^3$ for at least one $i$, and if $\eps$
is small enough then $\mb{x}=0$ necessarily.
\end{proof}

\begin{thm}\label{nonhthm}There exists $\eps>0$ such that if $(1-\eps)M\subseteq M'\subseteq(1+\eps)M$
then $A'/2\Delta'\ge A/2\Delta$, with equality only when $M'$ is affinely equivalent to $M$.\end{thm}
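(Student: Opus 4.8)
The plan is to deduce Theorem \ref{nonhthm} from Theorem \ref{heptthm} by showing that, once $\eps$ is small enough, the hypothesis $A'/2\Delta'\le A/2\Delta$ forces $M'$ to be a heptagon; Theorem \ref{heptthm} then yields $A'/2\Delta'\ge A/2\Delta$ with equality only when $M'$ is affinely equivalent to $M$, which is the assertion (if $A'/2\Delta'<A/2\Delta$ we would get $M'$ a heptagon and hence $A'/2\Delta'\ge A/2\Delta$, a contradiction; if $A'/2\Delta'=A/2\Delta$ then $M'$ is a heptagon and the equality clause of Theorem \ref{heptthm} applies). Equivalently, one shows that an $M'$ near $M$ that is \emph{not} a heptagon satisfies $A'/2\Delta'>A/2\Delta$ strictly. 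It is convenient to rewrite $\Delta(K)=\min_{\mathbf w}\tfrac12 L_K(\mathbf w)\,d_K(\mathbf w)$, where $L_K(\mathbf w)$ is the length of $K$ in direction $\mathbf w$ and $d_K(\mathbf w)$ is the distance between the two chords of $K$ of length $\tfrac12 L_K(\mathbf w)$ parallel to $\mathbf w$ (one on each side of the $\mathbf w$-diameter): every inscribed half-length parallelogram is the convex hull of such a pair of chords. The first ingredient is a finite computation with the given coordinates: for $K=M$ this minimum over $\mathbf w$ is attained exactly at the seven directions perpendicular to the edges $\mathbf m_{i+3}\mathbf m_{i+4}$, and nondegenerately, i.e.\ the function $\mathbf w\mapsto \tfrac12 L_M(\mathbf w)d_M(\mathbf w)$, which is smooth near each of these directions, has positive second derivative there. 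Consequently, for $M'$ near $M$ the minimum defining $\Delta(M')$ is attained near one of the seven directions; after a rotation, say near the horizontal, $\Delta(M')=\tfrac12 L_{M'}(\mathbf w^*)d_{M'}(\mathbf w^*)$, where the first factor depends on $\partial M'$ only near the endpoints of the $\mathbf w^*$-diameter (near $\mathbf m_0$ and near the midpoint of $\mathbf m_3\mathbf m_4$) and the second only near the two half-width chords, whose endpoints $\mathbf p_1,\dots,\mathbf p_4$ lie in relative interiors of edges of $M$ at positive distances from the vertices.

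The second ingredient is the effect of the non-heptagonal features of $\partial M'$. Since $\partial M'$ is convex, a feature in the relative interior of an edge of $M$ must be an outward bulge; it increases $A'$, and since $M'$ then contains $M$ there it can only lengthen, never shorten, the seven critical diameters, so $\Delta(M')$ either remains equal to $\Delta$ — the minimum still being attained by a critical direction whose parallelogram the bulge did not touch — or, if the bulge happens to lengthen a critical diameter and shrink its parallelogram, it \emph{decreases}; in either case $A'/2\Delta'=A'/2\Delta(M')>A/2\Delta$. On the other hand, a non-heptagonal feature at a vertex $\mathbf m_i$ is a rounding: it moves the endpoint near $\mathbf m_i$ of the $i$-th critical diameter inward, so $L_{M'}$ of that direction decreases at first order in the rounding depth $t$, while the two half-width chords — located far from $\mathbf m_i$ — move apart only at a strictly slower rate; a short computation using $a$, $b$, the coordinates of $\mathbf p_1,\dots,\mathbf p_4$, and the slopes of the edges of $M$ shows the $i$-th critical parallelogram then has area $\Delta-ct+O(t^2)$ with $c>0$, whereas the rounding changes $A'$ only by $O(t^2)$, so again $A'/2\Delta'>A/2\Delta$. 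Realizing a general non-heptagonal $M'$ as a heptagon near $M$ together with such bulges and roundings, Theorem \ref{heptthm} controls the heptagonal part while each non-heptagonal feature pushes $\delta_{L^*}$ strictly up, and the desired strict inequality follows.

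The main obstacle is making this last step quantitative and uniform: one must treat $\Delta(M')$ and $A'$ as functions of the \emph{entire} perturbation rather than of a single isolated feature, and ensure that the first-order gain contributed by any non-heptagonal bulge or rounding is never cancelled by the at-most-first-order change produced by the heptagonal displacement of vertices, nor by interactions between different features. This rests on the nondegeneracy of the seven minima of $\tfrac12 L_M(\mathbf w)d_M(\mathbf w)$ — which allows one to bound, uniformly over $M'$ near $M$, the minimizing direction near a critical one and the area of the corresponding parallelogram below by $\Delta$ minus explicit first-order corrections — together with a careful bookkeeping of the signed first-order rates (computable from $a$, $b$, $\mathbf p_1,\dots,\mathbf p_4$, and the edge data of $M$) at which each critical parallelogram responds to a rounding, assembled into a single estimate valid for all seven symmetry types and all admissible $M'$.
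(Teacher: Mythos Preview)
Your high-level strategy matches the paper's: assume $A'/2\Delta'\le A/2\Delta$, show that $M'$ must then be a heptagon, and invoke Theorem~\ref{heptthm}. The gap is that you explicitly name the main obstacle in your third paragraph and then do not overcome it.

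The single-feature analysis in your second paragraph does not survive superposition. For a \emph{single} bulge over one edge there are indeed critical directions whose half-length parallelogram is untouched, so $\Delta'\le\Delta$ while $A'>A$. But a generic nearby $M'$ has bulges over \emph{all seven} edges, and then no $\Delta_i$ is unaffected. Worse, a bulge located near one of the contact points $\mathbf p_j$ pushes the half-chord outward and \emph{increases} that $\Delta_i$; nothing in your outline prevents $\min_i\Delta_i$ from increasing faster than $A'$. Your decomposition into ``bulges and roundings'' is also ambiguous: if the inscribed heptagon $M_0$ has its vertices on $\partial M'$, there are no roundings at all, so that case never arises separately; if instead you perturb from the regular $M$, the heptagonal part is not isolated from the features and Theorem~\ref{heptthm} does not apply cleanly. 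Finally, the first-order claim $\Delta_i=\Delta-ct+O(t^2)$ with $c>0$ under a rounding is a nontrivial inequality between edge slopes and the ratio $L/d$ that you assert but do not verify.

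The paper carries out exactly the ``careful bookkeeping'' you defer. It reduces the infinite-dimensional perturbation to a finite vector $(\mathbf x,\mathbf x')\in\mathbb R^{14}\times\mathbb R^{42}$ by sampling the height functions $h_i(t)$ at the four relevant parameters $a,b,1-b,1-a$ and recording the displacement of the diameter feet $\mathbf k_i'$. Convexity of $\partial M'$ is then used twice: once to manufacture an explicit inscribed half-length parallelogram (giving $\Delta_i\ge\Delta'$) and an inscribed polygon (giving $A''\le A'$), and once to produce $42$ linear inequalities $\psi_i\ge0$ encoding both the concavity of $h_i$ and the tangency condition at $\mathbf k_i'$ forced by the local length minimum. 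The linearizations $\langle\mathbf f_i',\mathbf x'\rangle\le0$ and $\langle\mathbf g_i',\mathbf x'\rangle\ge0$ are then shown, via an explicit positive combination (a Farkas/LP certificate), to have only the trivial solution $\mathbf x'=0$, which forces $M'$ to be a heptagon. This step---producing the auxiliary constraints $\psi_i$ and the explicit positive coefficients $c_i,d_i$---is the content your sketch is missing.
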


\begin{proof}
We now allow $M'$ to be an arbitrary domain, not necessarily a heptagon.
Consider the length of $M$ as a function of direction.
This function has seven local minima, corresponding to the chords from each vertex $\mb{m}_i$ to the midpoint
of the opposite edge $\mb{k}_i$. The corresponding function for $M'$ must also, when $\eps$ is sufficiently
small, have at least seven local minima realized by chords $\mb{m}_i'\mb{k}_i'$, where
$||\mb{m}_i'-\mb{m}_i||,||\mb{k}_i'-\mb{k}_i||< C\eps$ for all $i$. As in the previous proof,
let us denote $\mb{m}_i' = R^i (1+x_i,v y_i)$. Additionally, let
$\mb{k}_i''$ be the nearest point on the chord $\mb{m}_{i+3}'\mb{m}_{i+4}'$ to the point $\mb{m}_i'$
and let $\mb{k}_i' = \mb{k}_i'' + R^i (x_i',y_i')$ (see Figure \ref{lenfig}).

\begin{figure}
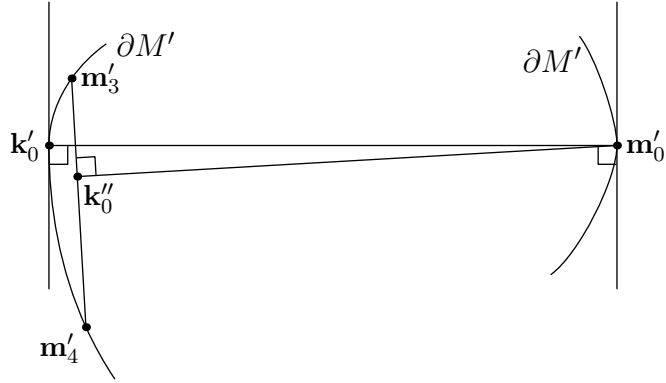
\begin{center}
\begin{asy}
import cseblack;
import olympiad;
import geometry;
usepackage("amssymb");
size(250);

pair M0=D("\mathbf{m}_0'",(1,0.2),E);
pair M3=D("\mathbf{m}_3'",rotate(3*360./7.)*(1,0),E);
pair M4=D("\mathbf{m}_4'",(rotate(4*360./7.)*(1,0))+(+0.05,0),SW);
pair K0P=D("\mathbf{k}_0'",M0+(-1.98,0),W);
path PM=M3+(0.12,0.12)..M3..K0P{down}..M4..M4+(0.1,-0.18);

MC("\partial M'",D(M0+(-0.23,-0.45)..tension 1.5 ..M0{up}..tension 1.5 ..M0+(-0.13,0.38)),0.9,W);
MC("\partial M'",D(PM),0.,E);
path CRD=D(M3--M4);
path ALT=M0--(M0+5*(rotate(90)*(M3-M4)));
pair K0PP=D("\mathbf{k}_0''",IP(ALT,CRD),SE);
D(M0--K0PP);
markrightangle(M0,K0PP,M3,size=2.5mm);
D(M0--K0P);
D(M0+(0,0.5)--M0-(0,0.5));
D(K0P+(0,0.5)--K0P-(0,0.5));
markrightangle(M0,K0P,K0P-(0,0.5),size=2.5mm);
markrightangle(M0-(0,0.5),M0,K0P,size=2.5mm);
\end{asy}
\caption{\label{lenfig}For a given domain $M'$ in the proof of Theorem \ref{nonhthm}, we identify
directions for which the length of $M'$ is a local minimum. For
example, in the illustration $\mb{m}_0'\mb{k}_0'$ is an affine diameter associated with
one of these directions. Other such affine diameters
originate at $\mb{m}_3'$ and $\mb{m}_4'$. To build more directly on the result for non-regular
heptagons of Theorem \ref{heptthm}, we give the coordinates of $(x_0',y_0')$ of $\mb{k}_0'$
in reference to $\mb{k}_0''$, the point closest to $\mb{m}_0'$ on the chord $\mb{m}_3'\mb{m}_4'$
(see text).}
\end{center}\end{figure}

For each chord $\mb{m}_i'\mb{m}_{i+1}'$ consider the arc of the boundary between
$\mb{m}_i'$ and $\mb{m}_{i+1}'$ as the graph of a
function $h_i(t)$, where $2vh_i(t)$ is the height of the boundary above the chord at the point
$(1-t)\mb{m}_i'+t\mb{m}_{i+1}'$ on the chord (see Figure \ref{hgtfig}).
Denote the corresponding boundary point $\mb{p}_i(t)$. The domain $M'$ is fully specified
by the points $\mb{m}_i'$ and $\mb{k}_i'$ and the functions $h_i(t)$, $i=0,\ldots,6$. However, we 
intend to use only the points $\mb{m}_i'$ and $\mb{k}_i'$ and the values $h_i(a)$, $h_i(b)$, $h_i(1-b)$,
and $h_i(1-a)$. Note that given the value of, say, $h_i(t_0)$, we can bound near-by values from convexity:
\begin{equation}\label{hbound}
\min\left(\frac{t}{t_0},\frac{1-t}{1-t_0}\right)\le \frac{h_i(t)}{h_i(t_0)}\le \max \left(\frac{t}{t_0},\frac{1-t}{1-t_0}\right)\text.
\end{equation}

\begin{figure}
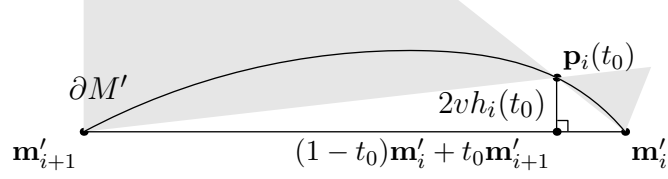
\begin{center}
\begin{asy}
import cseblack;
import olympiad;
import geometry;
usepackage("amssymb");
size(250);

real U=0.9009688679024191;
real A=1.75-2*U*U;

pair M0=D("\mathbf{m}_i'",(1,0),SE);
pair M1=D("\mathbf{m}_{i+1}'",(-1,0),SW);
pair MT=D("(1-t_0)\mathbf{m}_i'+t_0\mathbf{m}_{i+1}'",(1-A)*M0+A*M1,SW);
pair PT=D("\mathbf{p}_i(t_0)",MT+(0,0.2),NE);
pair PTP=MT+(-0.5,0.3);
real A1 = 0.2;
real A0 = 1.5;
pair S0=(1+A0)*PT-A0*M0;
pair S1=(1+A1)*PT-A1*M1;
pair S2=(M1.x,S0.y);

fill(S0--S2--M1--PT--cycle,lightgray);
fill(S1--M0--PT--cycle,lightgray);

MC("\partial M'",D(M0..PT..PTP..M1),0.9,NW);
D(M0--M1);
MC("2vh_i(t_0)",D(MT--PT),0.5,W);
markrightangle(M0,MT,PT,size=1.5mm);
\end{asy}
\caption{\label{hgtfig}The arc of the boundary of $M'$ between the points $\mathbf{m}_i'$ and
$\mathbf{m}_{i+1}'$ is given by the graph of the function $h_i(t)$. The highlighted
gray area marks the region where the boundary must lie according to \eqref{hbound}.}
\end{center}\end{figure}

Consider now the two chords parallel to $\mb{m}_i'\mb{k}_i'$ and half of its length. It is impossible
to determine the distance between them based on only the values we have decided to use. However,
we can bound it from above by replacing the actual boundary of $M'$ with the graph of the upper
bound given by \eqref{hbound}. Specifically, we replace the boundary above $\mb{m}_{i+1}\mb{m}_{i+2}$,
$\mb{m}_{i+2}\mb{m}_{i+3}$, $\mb{m}_{i+4}\mb{m}_{i+5}$, and $\mb{m}_{i+5}\mb{m}_{i+6}$, respectively with
the upper bound given by $t_0=a,b,(1-b),$ and $(1-a)$. We then find the chords of the replacement boundary arcs
that are parallel to $\mb{m}_i'\mb{k}_i'$ and half of its length, and call the area of the resulting
parallelogram $\Delta_i$. Note that $\Delta_i\ge\Delta'$, since $\Delta_i$ is no smaller than the area
of an actual half-length parallelogram inscribed in $M'$, which is in turn no smaller than
the smallest such area. Let $A''$ be the area of the polygon
$\mb{m}_0'\mb{p}_0(a)\mb{p}_0(b)\mb{k}_4'\mb{p}_0(1-b)\mb{p}_0(1-a)\mb{m}_1'\ldots\mb{p}_6(1-b)\mb{p}_6(1-a)$, so we
have $A''\le A'$. We will assume that $A'/2\Delta'\le A/2\Delta$, and show that this necessarily
implies that $M'$ is affinely equivalent to $M$. Since $A''/2\Delta_i\le A'/2\Delta'$, then
$\phi_i=\tfrac{A''/2\Delta_i}{A/2\Delta}-1\le 0$.

Let us consider $\phi_i$ as a function of $\mb{x}=(x_0,y_0,x_1,y_1,\ldots,y_6)\in\mathbb{R}^{14}$, and
$\mb{x}'=(x_0',y_0',x_1',y_1',\ldots,y_6',\allowbreak h_0(a),h_0(b),h_0(1-b),
\allowbreak h_0(1-a),h_1(a),\allowbreak \ldots,h_6(1-a))\in\mathbb{R}^{42}$.
In contrast to the last proof, here $\phi_i$ is not analytic in any neighborhood of
the origin in $\mathbb{R}^{14}\times\mathbb{R}^{42}$. However,
it does, everywhere in such a neighborhood, take the value of one of 16 analytic functions
(let us call them $\phi_{ij}(\mb{x},\mb{x}')$, $j=1,\ldots,16$),
based on whether $t>t_0$ or not at the point of contact of the paralellogram with each of
the four replacement boundary arcs. When $\mb{x}'=0$ all sixteen functions
agree. Also, the first derivatives of $\phi_{ij}$ with respect to any component
taken at the origin are independent of $j$. Therefore, we have that 
$$\phi_i(\mb{x},\mb{x}') \ge \phi_i(\mb{x},0) + \langle \mb{f}_i',\mb{x}'\rangle - C||\mb x'||(||\mb x||+||\mb x'||)\text.$$
Note that $\phi_i(\mb{x},0)\ge \tfrac{A_0/2\Delta_0}{A/2\Delta}-1$, where $A_0/2\Delta_0$ is the
double-lattice packing density of the heptagon $M_0=\mb{m}_0'\mb{m}_1'\ldots\mb{m}_6'$.
From Theorem \ref{heptthm} it then follows that $\phi_i(\mb{x},0)\ge0$.
For explicit values of $\mb{f}_i'$, see Table \ref{tabfgp}.

We now consider additional functions $\psi_i(\mb{x},\mb{x}')$, $i=1,\ldots,42$ given by
(in each of the definitions that follow $i=1\ldots,7$)
$$\begin{aligned}
\psi_i &= \langle\mb{k}_{i+4}'-\mb{m}_{i+4}',\mb{k}_{i+4}'-\mb{p}_{i}(b)\rangle \\ 
\psi_{i+7} &= \langle\mb{k}_{i+4}'-\mb{m}_{i+4}',\mb{k}_{i+4}'-\mb{p}_{i}(1-b)\rangle  \\ 
\psi_{i+14} &= \alpha(\mb{p}_i(a),\mb{p}_i(b),\mb{k}_{i+4}')  \\ 
\psi_{i+21} &= \alpha(\mb{k}_{i+4}',\mb{p}_i(1-b),\mb{p}_i(1-a))  \\ 
\psi_{i+28} &= \alpha(\mb{m}_i',\mb{p}_i(a),\mb{p}_i(b))  \\ 
\psi_{i+35} &= \alpha(\mb{p}_i(1-b),\mb{p}_i(1-a),\mb{m}_{i+1}') \text, 
\end{aligned}$$
where
$$\alpha(\mb{p},\mb{p}',\mb{p}'') = \mb{p}\wedge\mb{p}' + \mb{p}'\wedge\mb{p}'' + \mb{p}''\wedge\mb{p}$$
is the oriented area of the triangle $\mb{p}\mb{p}'\mb{p}''$.
From the fact that $\mb{m}_i'\mb{k}_{i}'$ is a locally shortest length, we have that a line through $\mb{k}_i'$
perpendicular to this length is tangent to $M'$, and therefore $\psi_i\ge0$ for $i=1,\ldots,14$.
That $\psi_i\ge0$ for $i=15,\ldots,42$ simply follows from convexity. These functions are all
analytic in a neighborhood of the origin, and therefore we have that 
$$\psi_i(\mb{x},\mb{x}') \le \psi_i(\mb{x},0) + \langle \mb{g}_i',\mb{x}'\rangle + C||\mb x'||(||\mb x||+||\mb x'||)\text.$$
Moreover, note that $\psi_i(\mb{x},0)=0$ for all $i=1,\ldots,42$.
For explicit values of $\mb{g}_i'$, see Table \ref{tabfgp}.

There exist coefficients $c_i>0$, $i=0,\ldots,6$, and $d_i>0$, $i=1,\ldots,42$, such
that $\sum_{i=0}^6 c_i\mb{f}_i'-\sum_{i=1}^{42} d_i\mb{g}_i'=0$.
It then follows from the fundamental theorem of linear algebra that if
$$\begin{aligned}
\langle \mb{f}_i',\mb{x}'\rangle&\le 0\text{ for }i=0,\ldots,6\text{ and }\\
\langle \mb{g}_i',\mb{x}'\rangle&\ge 0\text{ for }i=1,\ldots,42\text,
\end{aligned}$$
then we have equality for all of the above. The solution space turns out to be trivial.
From compactness there must be a constant $C$ such that at least one of the following equations holds for at least one $i$
$$\begin{aligned}
\langle \mb{f}_i',\mb{x}'\rangle&\ge C||\mb{x}'||\text{ or}\\
\langle \mb{g}_i',\mb{x}'\rangle&\le -C||\mb{x}'||\text.
\end{aligned}$$
Therefore, it follows from the fact that $\phi_i\le0$ and $\psi_i\ge0$ for all $i$,
that there exists $\eps$ such that if $||\mb{x}||,||\mb{x}'||<\eps$ then $\mb{x}'=0$.
If $\mb{x}'=0$, then from convexity $h_i(t)=0$ for all $i=0,\ldots,6$ and $0\le t\le1$,
and $M'$ is a heptagon. From Theorem \ref{heptthm} $M'$ is affinely equivalent to $M$.
\end{proof}

\begin{cnj}The regular heptagon is an absolute minimum of $\delta_{L^*}$ in
two dimensions.\end{cnj}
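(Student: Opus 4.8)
We sketch a route to the conjecture that builds on Theorems \ref{heptthm} and \ref{nonhthm}. The plan is to reduce the global statement to a finite-dimensional semialgebraic optimization and then to resolve that optimization. The principal---and hardest---step is to show that every global minimizer of $\delta_{L^*}$ is a polygon with a bounded number of vertices. The natural framework is a calculus-of-variations argument in the spirit of Hales's treatment of the Reinhardt conjecture: parametrize a putative minimizer $K$ by its support function $h$, so that convexity becomes the requirement that $\mu := h''+h\ge 0$ as a measure on the circle of directions, while $A(K)$ is the usual quadratic area functional. For a minimizer the governing affine-diameter direction(s) and the vertices $\mb p_1,\dots,\mb p_4$ of the least-area half-length parallelogram are, generically, locally constant in the direction of a variation of $K$; with these data frozen, $\Delta$ becomes an analytic functional of $h$. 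A first-order analysis of $A/2\Delta$ under variations supported on an arc that meets neither the affine-diameter contacts nor the parallelogram contacts should then force $\mu$ to be supported on finitely many points---that is, $K$ is a polygon, the extremal being \emph{bang-bang}. Bounding the number of vertices requires a separate argument: one expects exactly seven, each of the seven families of locally shortest chords contributing an independent constraint, much as the seven functions $\phi_i$ do in the proof of Theorem \ref{heptthm}.

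Once a minimizer is known to be a polygon with at most $N$ vertices, one passes to the compact space of affine equivalence classes and enumerates the finitely many combinatorial configurations: for each, one fixes which edges carry the parallelogram vertices $\mb p_1,\dots,\mb p_4$, which vertex--edge pair realizes the governing affine diameter, and the sign pattern that decides which of the two candidate half-length chords is active---exactly the data that produced the sixteen analytic branches $\phi_{ij}$ in the proof of Theorem \ref{nonhthm}. Within each configuration $A$ and $\Delta$ are explicit rational functions of the vertex coordinates, and one must verify the inequality $A/2\Delta\ge\tfrac{2}{97}(-111+492u-356u^2)$ subject to the convexity, affine-diameter, and half-length constraints. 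This is a finite collection of polynomial optimization problems, solvable in principle by cylindrical algebraic decomposition and in practice by a branch-and-bound search with validated interval arithmetic, with Theorems \ref{heptthm} and \ref{nonhthm} disposing of an $\eps$-neighborhood of the regular heptagon.

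The crux is the reduction to polygons. Because $\Delta(K)$ is a minimum over half-length parallelograms whose admissible directions are themselves constrained by the affine-diameter condition---itself a maximum over parallel chords---the function $\delta_{L^*}$ is only piecewise analytic, and the variational argument must track which of these nested constraints are active. This is precisely the difficulty that forced the bookkeeping with the auxiliary functions $\psi_i$ in the proof of Theorem \ref{nonhthm}, but there it was controlled only because the combinatorial type was fixed throughout an $\eps$-neighborhood. Handling it globally---in particular, obtaining an a priori bound on the number of vertices of a minimizer with no closeness hypothesis---is where the real work lies; an appealing but so far unreachable alternative would be to show directly that $\delta_{L^*}$ has no local minimum other than the regular heptagon, whence compactness of the space of affine classes would finish the proof.
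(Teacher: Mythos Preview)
The statement you are addressing is a \emph{Conjecture} in the paper, not a theorem; the paper offers no proof and explicitly leaves it open. So there is no ``paper's own proof'' to compare against, and your submission should be read as a research outline rather than a proof.

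As such an outline it is honest about its own gaps, and those gaps are genuine. The reduction to polygons via a bang-bang argument is plausible heuristics but not a proof: the functional $\Delta(K)$ is a min over a family whose parametrization itself depends on $K$ through the affine-diameter condition, so the first variation is not simply a linear functional of $h$ on arcs away from contact points, and concluding that $\mu=h''+h$ is atomic requires more than you have written. Even granting that minimizers are polygons, you give no mechanism for the vertex bound; the analogy with the seven $\phi_i$ is suggestive but does not by itself cap $N$. Finally, the finite case analysis you describe is in principle finite but not obviously tractable: the number of combinatorial types grows rapidly, and the polynomial systems involve algebraic numbers of degree three (via $u=\cos(\pi/7)$), which is a real obstacle for CAD-based certification.

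In short: your sketch is a reasonable plan of attack, and you correctly identify where the difficulty lies, but it is not a proof and the paper does not claim one either.
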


If, as might very well be the case, $\delta(M)=\delta_{L^*}(M)$, then
the conjecture would also imply that $M$ is a minimum of $\delta$.

\section{The 3-ball}

Let $K$ and $K'$ be convex bodies and let $\lambda,\lambda'\ge0$ not both equal to $0$, then
the set $\lambda K+\lambda' K' = \{\lambda\mathbf{x}+\lambda'\mathbf{x}': \mathbf{x}\in K, \mathbf{x}\in K'\}$
is also a convex body. This operation is known as the Minkowski sum.
A convex body $K\subseteq\mathbb{R}^n$ can be specified by its support height function $h_K:S^{n-1}\to\mathbb{R}$,
given by $h_K(\mb{x}) = \max_{\mb{y}\in K} \langle\mb{x},\mb{y}\rangle$. Minkowski addition corresponds
to addition of the support height functions: $h_{\lambda K+\lambda' K'}(\mb{x}) = \lambda h_K(\mb{x}) + \lambda' h_{K'}(\mb{x})$.
The mean width of a body $K$ is the average length of its projection onto
a randomly chosen axis. It is given by 
$$w = \frac{2\int_{S^{n-1}} h_K d\sigma}{\sigma(S^{n-1})}\text,$$
where $\sigma$ is the Lebesgue measure on $S^{n-1}$.
Out of all linear images $TK$ of a body $K$, there
is a unique one up to rotation which minimizes $w$
while preserving the volume \cite{minwidth}. This is known as the minimal-mean-width
position of $K$, and a body is in its minimal-mean-width position
if and only if
$$\int_{S^{n-1}} h_K(\mathbf{x}) \langle\cdot,\mathbf{x}\rangle^2 d\sigma(\mathbf{x}) = (w/2n)\sigma(S^{n-1})||\cdot||^2\text.$$
Steiner's formula gives the volume of the body $K_\lambda = (1-\lambda)B+\lambda K$,
interpolating between the unit ball $B$ ($\lambda=0$) and the body $K$ ($\lambda=1$).
In three dimensions, Steiner's formula can be written as
\begin{equation}\label{steineq}
\vll(K) = \tfrac{4\pi}{3} (1-\lambda)^3 + 2\pi w \lambda + S(K) \lambda^2(1-\lambda) + \lambda^3 \vll(K)\text,
\end{equation}
where $S(K)$ is the surface area of $K$ \cite{schneider}.

In this section we prove the following result about the unit ball
and the double-lattice packing density of nearly spherical bodies:

\begin{thm}\label{b3thm}Let $K$ be a three-dimensional body in minimal-mean-width position.
If $K$ is not a ball,
then there exist numbers $\lambda_0(K)>0$ and $\beta(K)>0$ such that
$$\delta_{L^*}\left((1-\lambda)B+\lambda K\right) -\delta_{L^*}(B) > \beta(K) \lambda\text,$$ 
for all $0<\lambda<\lambda_0(K)$.\end{thm}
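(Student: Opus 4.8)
The plan is to write $\delta_{L^*}$ of a nearly-spherical body as an explicit functional of the small perturbation, expand it to first order, and show the linear term is strictly positive unless the perturbation is a linear map. First I would parametrize $K$ near $B$ by its support function $h_K = 1 + f$, where $f:S^2\to\mathbb R$ is small; then the body $K_\lambda = (1-\lambda)B+\lambda K$ has support function $1+\lambda f$. The key structural fact I would exploit is that the ball, being centrally symmetric and having a transitive symmetry group, is a degenerate point of $\delta_{L^*}$: every direction realizes an optimal double lattice, so the densest double lattice of $B$ is the FCC lattice (density $\pi/\sqrt{18}$), and there is a whole $SO(3)$-orbit of them. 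For a perturbed body $K_\lambda$, the optimal double lattice must be chosen from (a neighborhood of) this orbit, so $\delta_{L^*}(K_\lambda)$ is obtained by \emph{minimizing} over the orbit the density of the double lattice adapted to $K_\lambda$. This min-over-a-compact-orbit structure is exactly what produces a one-sided (directional) derivative rather than a genuine derivative, which is why the theorem is phrased with an inequality and a lower bound $\beta(K)\lambda$ rather than an asymptotic equality.

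The main computation is then: for a fixed orientation $\rho\in SO(3)$, the volume $D(\rho)$ of the fundamental domain of the double lattice adapted to $K_\lambda$ is, to first order in $\lambda$, $D_0 + \lambda\, \ell_\rho(f) + O(\lambda^2)$, where $D_0 = \vll(B)/\delta_{L^*}(B)$ and $\ell_\rho$ is a linear functional on $f$ built from the support-function values (and a few derivatives) of $K$ at the finitely many contact directions of the FCC double lattice in orientation $\rho$ — these are the directions of the face normals and the inversion centers. Using Steiner's formula \eqref{steineq}, $\vll(K_\lambda) = \vll(B) + \lambda(2\pi w - \tfrac{4\pi}{3}\cdot 3) + O(\lambda^2)$, i.e.\ the first-order change in volume is governed by the mean width $w$; the minimal-mean-width hypothesis is precisely what one needs so that, after an overall normalization, the volume term and the lattice term can be compared cleanly. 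Putting these together, $\delta_{L^*}(K_\lambda) - \delta_{L^*}(B) = \lambda\bigl(a\,\overline f + \min_{\rho}\, b\,\ell_\rho(f)\bigr) + O(\lambda^2)$ for explicit constants $a,b>0$, where $\overline f = \int_{S^2} f\, d\sigma/\sigma(S^2)$. I would then show the bracketed quantity is strictly positive whenever $f$ is not the restriction of a linear functional plus a constant (equivalently, whenever $K$ is not a ball, since $K$ is assumed already in minimal-mean-width position so the linear/translational part is normalized away): the contact directions of the FCC double lattice, as $\rho$ ranges over $SO(3)$, sweep out enough of $S^2$ that no nonzero admissible $f$ can make $\ell_\rho(f)\le -c\|f\|$ fail for every $\rho$; this is again a compactness/"fundamental theorem of linear algebra" argument exactly in the spirit of the two-dimensional proofs above, where one needs a positive combination of the relevant functionals to vanish identically. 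The strict positivity of $\beta$ comes from the positive-definiteness of the associated quadratic/linear form on the orthogonal complement of the linear functions.

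The hard part, I expect, is establishing the first-order expansion $D(\rho) = D_0 + \lambda\,\ell_\rho(f) + O(\lambda^2)$ uniformly in $\rho$, together with the claim that for a perturbed body the optimum is genuinely attained near the FCC orbit and not at some qualitatively different double-lattice configuration. Controlling this requires a lower-semicontinuity / stability statement: double lattices far from the FCC orbit have density bounded away from $\pi/\sqrt{18}$ for $B$ (this is where one must lean on the known characterization of optimal double lattices of the ball, or on the strict local maximality of FCC), so by continuity they cannot become optimal for $K_\lambda$ when $\lambda$ is small. Once that reduction is in place, differentiating the (piecewise-analytic) volume of the fundamental domain with respect to $\lambda$ at $\lambda = 0$, with the lattice parameters solved implicitly in terms of $f$ via the contact/tangency conditions, is a finite computation analogous to the $\phi_i$, $\psi_i$ bookkeeping in the heptagon proof — tedious but routine — and the final sign check is then a linear-algebra verification over the explicit family of contact functionals.
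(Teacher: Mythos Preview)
Your outline has the right overall shape---perturb the support function, expand the density to first order in $\lambda$, and find an orientation for which the linear term is positive---but there are two substantive issues.

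First, a conceptual slip: $\delta_{L^*}$ is a \emph{supremum} over admissible double lattices, so to bound $\delta_{L^*}(K_\lambda)$ from below you only need to exhibit \emph{one} good double lattice; you do not need to locate the optimum or prove any stability statement confining it to a neighborhood of the f.c.c.\ orbit. Your formula with $\min_\rho$ is therefore both the wrong sign and an unnecessary complication. The paper exploits exactly this: it writes down an explicit \hcp{}-based double lattice admissible for any near-sphere $K$ (Lemma~\ref{lemcns}), with mean volume controlled by $\eta(K)=\tfrac1{12}\sum_{i=1}^{12}h_K(\mb{x}_i)$, so that $\delta_{L^*}(K_\lambda)\ge \vll(K_\lambda)/(4\sqrt{2}\,\eta(K_\lambda)^3)$. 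The ``hard part'' you anticipate (uniform first-order control of the true optimum, ruling out far-away configurations) simply never arises.

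Second, and more importantly, your proposal is vague at exactly the point that carries all the content: showing that some orientation makes the first-order term positive. ``The contact directions sweep out enough of $S^2$'' is not an argument; in fact the 12 \hcp{} contact directions, averaged over rotations, annihilate precisely the spherical harmonics of degree $1$ and $2$ and no others. This is the paper's Lemma~\ref{lemcl}, a number-theoretic computation on Legendre polynomials showing that $c_l=0$ iff $l\in\{1,2\}$. The minimal-mean-width hypothesis is used exactly here: it forces the $l=2$ component of $h_K$ to vanish, so the zonal convolution $g(\mb{y})=\tfrac1{12}\sum_l c_l h_l(\mb{y})$ is constant iff $h_K$ has only $l\le1$ components, i.e.\ iff $K$ is a ball. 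For $K$ not a ball, $g$ is nonconstant with mean $w/2$, hence some rotation $R$ gives $\eta(R(K))<w/2$; combined with Steiner's formula and the isoperimetric inequality this yields the claimed linear gap. Your sketch contains neither the convolution/spherical-harmonic idea nor any substitute for it, and there is no second-order or ``positive-definiteness'' step as in the heptagon proof---the argument here is purely first order.
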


The double-lattice packing density of $B$ is $\pi/\sqrt{18}$. It is realized,
for example, by its optimal lattice packing, the face-centered cubic lattice (f.c.c.),
which can be described degenerately as a double lattice. It is also realized
by the hexagonally closed packed structure (h.c.p.), which is not a Bravais
lattice. We fix a realization of the \hcp structure in which the unit ball
centered at the origin shares the following twelve boundary points with neighboring balls:
$\mb{x}_1=(1,0,0)$, $\mb{x}_2=(1/2,\sqrt{3}/2,0)$,
$\mb{x}_3=\mb{x}_2-\mb{x}_1$, $\mb{x}_4=-\mb{x}_1$, $\mb{x}_5=-\mb{x}_2$, $\mb{x}_6=\mb{x}_1-\mb{x}_2$,
$\mb{x}_7=(1/2,1/\sqrt{12},\sqrt{2/3})$, $\mb{x}_8=\mb{x}_7-\mb{x}_1$, $\mb{x}_{9}=\mb{x}_7-\mb{x}_2$
$\mb{x}_{10}=(1/2,1/\sqrt{12},-\sqrt{2/3})$, $\mb{x}_{11}=\mb{x}_{10}-\mb{x}_1$, and $\mb{x}_{12}=\mb{x}_{10}-\mb{x}_2$.
The double lattice $\Xi$, of mean volume $d(\Xi)=4\sqrt{2}$, is generated by translations by
$2\mb{x}_1$ and $2\mb{x}_2$ and by
inversions about $\mb{x}_7$ and $\mb{x}_{10}$.
Let $P$ be the polyhedron
$\{\mb{x}\in\mathbb{R}^3:\langle\mb{x},\mb{x}_i\rangle\le1\text{ for all }i=1,\ldots,12\}$,
then $P$ is the Voronoi cell of the \hcp structure.

The double lattice $\Xi$
is admissible for $P$ too, producing a packing of density 1, namely a tiling. Specifically,
this is a face-to-face tiling in the strong sense that every two cells share a face or do not
touch at all. We show now that if $K$ is a nearly spherical convex body,
we can bound its double-lattice packing density using the values $h_K(\mb{x}_i)$, $i=1,\ldots,12$.

\begin{lem}\label{lemcns} Let $K$ be a convex body satisfying $(1-\eps)B\subseteq K\subseteq(1+\eps) B$.
For sufficiently small $\eps$, a double-lattice $\Xi'$ exists, admissible for $K$, such that
$d(\Xi')\le d(\Xi) \eta(K)^3$, where $\eta(K)=\tfrac{1}{12}\sum_{i=1}^{12} h_K(\mb{x}_i)$.\end{lem}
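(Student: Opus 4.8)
Proof proposal.

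The plan is to produce $\Xi'$ by deforming $\Xi$ itself as a double lattice, keeping its combinatorial type — twelve touching neighbours, six reached by translations and six by inversions — rather than by moving $K$. A convenient first reduction is to the normalized case $\eta(K)=1$: replacing $K$ by $K/\eta(K)$ multiplies each $h_K(\mathbf{x}_i)$ by $1/\eta(K)$ and hence makes $\eta=1$, and if $\Xi''$ is admissible for $K/\eta(K)$ with $d(\Xi'')\le d(\Xi)$ then $\eta(K)\,\Xi''$ is admissible for $K$ with $d(\eta(K)\Xi'')=\eta(K)^3 d(\Xi'')\le d(\Xi)\eta(K)^3$ (note $\eta(K)\ge 1-\eps>0$). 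So it suffices to show: if $(1-\eps)B\subseteq K\subseteq(1+\eps)B$ and $\sum_{i=1}^{12}h_K(\mathbf{x}_i)=12$, then for $\eps$ small there is a double lattice of the combinatorial type of $\Xi$, admissible for $K$, with mean volume at most $d(\Xi)$.

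Next I would parametrize the double lattices of this type near $\Xi$ by a vector $\mathbf{z}\in\mathbb{R}^{12}$ recording the displacements of the two translation generators near $2\mathbf{x}_1,2\mathbf{x}_2$, of the stacking generator near $2(\mathbf{x}_7-\mathbf{x}_{10})$, and of the inversion centre near $\mathbf{x}_7$. For $\eps$ small, admissibility of $\Xi'(\mathbf{z})$ for $K$ reduces to the nine inequalities surviving from the twelve tangencies of the tiling by $P$: the three short lattice vectors (those near $2\mathbf{x}_1,2\mathbf{x}_2,2\mathbf{x}_3$) must avoid $\mathrm{int}(K-K)$, and the six coset midpoints near $\mathbf{x}_7,\ldots,\mathbf{x}_{12}$ must avoid $\mathrm{int}(K)$; every other lattice vector and coset point keeps a definite distance from the relevant interior and imposes nothing. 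Writing these nine conditions, to leading order, as $\ell_k(\mathbf{z})\ge r_k$ where each $r_k$ is a sum of one or two of the numbers $\epsilon_i:=h_K(\mathbf{x}_i)-1$ arranged so that $\sum_k r_k=\sum_i\epsilon_i$, the essential structural fact — which one checks by the kind of direct calculation done in the heptagon section, using that $\mathbf{x}_1,\ldots,\mathbf{x}_{12}$ form a tight frame, $\sum_i\mathbf{x}_i\mathbf{x}_i^{\top}=4I$ — is that the gradient of $\log d(\Xi'(\mathbf{z}))$ at $\mathbf{z}=0$ equals $\tfrac14\sum_{k=1}^9\nabla\ell_k$. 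Hence the linear-in-$\mathbf{z}$ part of $\log\!\big(d(\Xi')/d(\Xi)\big)$ is $\tfrac14\sum_k\ell_k(\mathbf{z})$, which on the admissible set is at least $\tfrac14\sum_k r_k=3(\eta(K)-1)$, with equality exactly when all nine constraints are active.

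I would then solve the nine exact (nonlinear) active constraints — nine conditions in the twelve coordinates of $\mathbf{z}$, so solvable by the implicit function theorem near the configuration giving $\Xi$ — to get, for each $K$ near $B$, a three-parameter family of double lattices admissible for $K$ with $d(\Xi')=d(\Xi)\,\eta(K)^3\big(1+O(\eps^2)\big)$ after undoing the normalization. Upgrading this to the exact bound $d(\Xi')\le d(\Xi)\eta(K)^3$ amounts to showing that the quadratic-in-$\epsilon$ correction to the minimal mean volume over this family, computed in the normalization $\eta=1$, is negative semidefinite in $\epsilon$ on the hyperplane $\sum_i\epsilon_i=0$, for which one may also spend the three remaining degrees of freedom in the family. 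The case of an ellipsoid $K=TB$, where the linear image $T\Xi T^{-1}$ already works and gives $d(\Xi')=|\det T|\,d(\Xi)$, which a short computation shows is strictly below $d(\Xi)\eta(K)^3$, is a useful consistency check for the sign.

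I expect this second-order step to be the crux: the tight-frame identity determines $d(\Xi')$ only through first order and the asserted inequality is an equality through first order, so all of the content sits in the Hessian of the constrained mean-volume functional and the verification of its sign. Alongside it one must carry out the routine but unavoidable bookkeeping that the "inactive" tangency conditions stay inactive under small deformations, and that for a nearly spherical $K$ the support function near each $\mathbf{x}_i$ is controlled by $h_K(\mathbf{x}_i)$ (and the position of the contact point) to the order needed for the reduction to the nine conditions to be valid.
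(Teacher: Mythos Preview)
Your outline is sound at the level of first-order analysis, and the normalization $\eta(K)=1$ is exactly the right reduction. But you correctly identify the second-order step as the crux and then do not carry it out: you only assert that the Hessian of the constrained mean-volume functional is negative semidefinite on $\sum_i\epsilon_i=0$, and an ellipsoid sanity check does not establish this. As stated, the proposal is incomplete at precisely the point where the content lies.

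The paper avoids your abstract optimization entirely by a much more concrete construction that makes the second-order sign fall out of a three-variable closed form. Two ideas make this work. First, instead of checking nine admissibility inequalities for $K$ directly, one passes to the circumscribed polyhedron $P'=\{\mathbf{x}:\langle\mathbf{x},\mathbf{x}_i\rangle\le h_i,\ i=1,\ldots,12\}$: since $K\subseteq P'$, it suffices to build a double lattice that \emph{tiles} by $P'$ (face-to-face), and then admissibility for $K$ is automatic. This disposes of your worries about ``inactive constraints'' and about controlling $h_K$ near each $\mathbf{x}_i$. Second, one exploits the layered h.c.p.\ structure: take $\mathbf{a}_1,\mathbf{a}_2$ in the $xy$-plane to be the optimal lattice for the hexagonal projection of $P'$, and determine the inversion centers $\mathbf{x}_7',\mathbf{x}_{10}'$ by the linear equations $\langle\mathbf{x}_i',\mathbf{x}_i\rangle=h_i$ together with $\mathbf{x}_8'=\mathbf{x}_7'-\tfrac12\mathbf{a}_1$, etc. Because $\mathbf{x}_7+\mathbf{x}_8+\mathbf{x}_9$ and $\mathbf{x}_{10}+\mathbf{x}_{11}+\mathbf{x}_{12}$ are vertical, the resulting mean volume depends only on the three ``hexagonal width excesses'' $\eta_j=h_j+h_{j+3}-2$, $j=1,2,3$ (the normalization $\sum h_i=12$ is what eliminates $h_7,\ldots,h_{12}$). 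One then gets the explicit expression
\[
d(\Xi')=4\sqrt{2}-\tfrac{\sqrt{2}}{9}\Big(\sum_j\eta_j\Big)^2-\tfrac{2\sqrt{2}}{3}\sum_j\eta_j^2+\text{(cubic in }\eta_j\text{)},
\]
whose quadratic part is visibly negative definite in $(\eta_1,\eta_2,\eta_3)$, and whose cubic part vanishes when the quadratic does. Hence $d(\Xi')\le 4\sqrt{2}$ for $\eps$ small, with no Hessian bookkeeping in twelve variables.

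In short: your tight-frame observation recovers the correct first-order picture, but the paper's layered construction collapses the problem from a $12$-parameter constrained optimization to an explicit function of three variables, which is what actually delivers the inequality.
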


\begin{proof}
Without loss of generality, let us assume that $\sum_{i=1}^{12} h_K(\mb{x}_i)=12$.
Let us label $h_i=h_K(\mb{x}_i)$ and
consider be the polyhedron $P'=\{\mb{x}\in\mathbb{R}^3:\langle\mb{x},\mb{x}_i\rangle\le h_i\text{ for all }i=1,\ldots,12\}$.
The projection of $P'$ onto the $xy$-plane is a hexagon. Let $\mb{a}_1$ and $\mb{a}_2$
be the vectors in the $xy$-plane generating the densest lattice packing of this hexagon.
In particular, for $\eps$ small enough, there is a unique choice such that $||\mb{a}_1-2\mb{x}_2||,||\mb{a}_2-2\mb{x}_2||<C\eps$.
Now, let $\mb{x}_7'$, $\mb{x}_8'$, and $\mb{x}_9'$, be the unique points satisfying
$\langle\mb{x}_i',\mb{x}_i\rangle=h_i$ for $i=7,8,9$,
$\mb{x}_8' = \mb{x}_7' - \tfrac{1}{2}\mb{a}_1$, and
$\mb{x}_9' = \mb{x}_7' - \tfrac{1}{2}\mb{a}_2$.
Similarly, let $\mb{x}_{10}'$, $\mb{x}_{11}'$, and $\mb{x}_{12}'$, be the unique points satisfying
$\langle\mb{x}_i',\mb{x}_i\rangle=h_i$ for $i={10},{11},{12}$,
$\mb{x}_{11}' = \mb{x}_{10}' - \tfrac{1}{2}\mb{a}_1$, and
$\mb{x}_{12}' = \mb{x}_{10}' - \tfrac{1}{2}\mb{a}_2$.
Now let $\Xi'$ be the double lattice generated by translations by $\mb{a}_1$
and $\mb{a}_2$ and by inversions about $\mb{x}_7'$ and $\mb{x}_{10}'$.
We note that for each face of $P'$ there is a neighbor $\xi'(P')$, $\xi'\in\Xi'$,
such that $P'$ and $\xi'(P')$ touch along this face. For small enough
$\eps$, this is enough to conclude that $\Xi'$ is admissible for $P'$,
since in the packing $\Xi(P)$ there are only face-to-face contacts. {\it A
fortiori}, $\Xi'$ is also admissible for $K$.

As $\mb{a}_1$, $\mb{a}_2$, $\mb{x}_7'$ and $\mb{x}_{10}'$ may be
determined explicitly as a function of $h_i$, $i=1,\ldots,12$,
we calculate the mean volume of $\Xi'$ to be
$$\begin{aligned}d(\Xi') =& 4\sqrt{2} - \tfrac{\sqrt{2}}{9} (\eta_1+\eta_2+\eta_3)^2 - \tfrac{2\sqrt{2}}{3} (\eta_1^2+\eta_2^2+\eta_3^2)\\
&+\tfrac{\sqrt{2}}{9}(\eta_1+\eta_2+\eta_3)\left(2(\eta_1^2+\eta_2^2+\eta_3^2)-(\eta_1+\eta_2+\eta_3)^2\right)\text,\end{aligned}$$
where $\eta_1=h_1+h_4-2$, $\eta_2=h_2+h_5-2$, and $\eta_3=h_3+h_6-2$. Note that the quadratic term
is negative unless $\eta_1=\eta_2=\eta_3=0$, in which case the quadratic and cubic term both vanish.
Therefore, when $\eps$ is small enough $d(\Xi')\le 4\sqrt{2}$.
\end{proof}

\begin{lem}\label{lemcl}Let 
$$c_l=P_l(1)+4P_l(\tfrac{1}{2})+2P_l(0)+P_l(-\tfrac{1}{3})+2P_l(-\tfrac{1}{2})+2P_l(-\tfrac{5}{6})\text,$$
where $P_l(t)$ is the Legendre polynomial of degree $l$. Then $c_l=0$ if and only if $l=1$ or $l=2$. \end{lem}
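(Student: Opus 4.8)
The plan splits the two implications. The ``if'' direction is a one-line verification: substituting $P_1(t)=t$ and $P_2(t)=\tfrac12(3t^2-1)$ into the defining combination and clearing denominators gives $c_1=c_2=0$. (Geometrically, the six abscissae $1,\tfrac12,0,-\tfrac13,-\tfrac12,-\tfrac56$ with multiplicities $1,4,2,1,2,2$ are exactly the inner products $\mathbf{x}_7\cdot\mathbf{x}_i$, $i=1,\ldots,12$, of the twelve \hcp contact points, so $c_l=\sum_i P_l(\mathbf{x}_7\cdot\mathbf{x}_i)$; the vanishing at $l=1,2$ then reflects $\sum_i\mathbf{x}_i=0$ and the isotropy $\sum_i\mathbf{x}_i\mathbf{x}_i^{\top}=4I$ respectively. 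But the bare numerical check is shorter and self-contained.)

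For the ``only if'' direction I must show $c_l\neq0$ for every $l\notin\{1,2\}$. Since $P_0\equiv1$, at once $c_0=1+4+2+1+2+2=12\neq0$. For $l\geq3$ the governing principle is that the term $P_l(1)=1$ dominates, because every other abscissa lies in the open interval $(-1,1)$, where $P_l$ decays. Quantitatively I would invoke a Bernstein-type bound of the form $|P_l(\cos\theta)|\leq c/\sqrt{l\sin\theta}$ (valid for $0<\theta<\pi$) and apply it at the five points $t\in\{\tfrac12,0,-\tfrac13,-\tfrac12,-\tfrac56\}$, whose values of $\sin\theta$ are $\tfrac{\sqrt3}{2},\,1,\,\tfrac{2\sqrt2}{3},\,\tfrac{\sqrt3}{2},\,\tfrac{\sqrt{11}}{6}$. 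Weighting by multiplicities yields $|c_l-1|\leq M/\sqrt{l}$ with an explicit constant $M$, hence a threshold $L_0$ with $c_l\geq 1-M/\sqrt{l}>0$ for all $l\geq L_0$.

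It then remains to settle the finitely many exponents $3\leq l<L_0$. Since the abscissae are rational and Legendre polynomials have rational coefficients, each such $c_l$ is an explicit rational number, computable from the recurrence $(l+1)P_{l+1}(t)=(2l+1)tP_l(t)-lP_{l-1}(t)$, and one checks case by case that it does not vanish. No uniform-sign shortcut is available here: already $c_3=\tfrac{5}{36}>0$ while $c_4=-\tfrac{91}{432}<0$, so the sign of $c_l$ changes.

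I expect the only real work to be keeping $L_0$ small enough that this finite check is painless. The crudest Bernstein constant gives an $L_0$ of order $10^2$; this drops substantially once one (i) treats odd $l$ separately, where $P_l(0)=0$ and $P_l(-\tfrac12)=-P_l(\tfrac12)$ leave only $c_l-1=2P_l(\tfrac12)+P_l(-\tfrac13)+2P_l(-\tfrac56)$, and (ii) uses a sharper Legendre estimate, e.g.\ with $l$ replaced by $l+\tfrac12$, or the Szeg\H{o} asymptotic with an explicit remainder term. With those refinements the residual range of exponents is modest and the remaining verification, though tedious, is entirely routine.
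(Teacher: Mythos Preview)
Your strategy is sound and would yield a valid proof once the finite check is actually carried out, but it follows a completely different route from the paper. The paper's argument is purely arithmetic: it clears denominators by setting $Q_l(t)=6^l\,l!\,P_l(t)$, so that $Q_l(k/6)\in\mathbb{Z}$ for $|k|\le6$, and then tracks the residues of $Q_l(k/6)$ modulo $8$ (and modulo $16$) via the three-term recurrence. One finds that $Q_l(0),Q_l(1/3),Q_l(1)\equiv0\pmod8$ for $l\ge3$, while $Q_l(1/2)$ and $Q_l(5/6)$ cycle with period $4$ through fixed residues. The upshot is that $6^l l!\,c_l\equiv4\pmod8$ for odd $l\ge3$ and $\equiv8\pmod{16}$ for even $l\ge3$, so $c_l\ne0$ uniformly with no case analysis beyond $l\le2$.

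The trade-off is clear. Your analytic approach is the natural first idea and needs no ingenuity beyond quoting Bernstein's bound, but it leaves a residual computation of order $10^2$ exponents (less with the odd/even split and sharper constants you mention, but still nontrivial to display in print). The paper's $2$-adic trick requires spotting the right rescaling and modulus, but once seen it handles all $l\ge3$ in a few lines and gives a proof that fits on the page. Your remark that $c_3>0$ while $c_4<0$ is a nice sanity check explaining why no sign-based shortcut exists; the paper's argument sidesteps this entirely by working modulo a prime power rather than over $\mathbb{R}$.
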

\begin{proof}
We introduce the following rescaled Legendre polynomials:
$Q_l(t)\allowbreak =\allowbreak 6^l l! P_l(t)$. From their recurrence relation---given by
$Q_{l+1}(t) = (2l+1) (6t) Q_l(t) - 36 l^2 Q_{l-1}(t)$---and the base cases---$Q_0(t)=1$ and $Q_1(t)=6t$---
it is clear that the values of $Q_l(t)$ at $t=k/6$ for $k=-6,\ldots,6$ are integers.
If $Q_l(k/6)\equiv Q_{l+1}(k/6)\equiv0\pmod{8}$ for some $k$ and $l$ then for all $l'\ge l$,
$Q_{l'}(k/6)\equiv0\pmod{8}$.
This is the case for $k=0,2,6$ and $l=3$, as can be easily checked.

For $k=3$ and $k=5$ it is easy to show by induction that
the residue of $Q_l(k/6)$ modulo $8$ depends only on $k$ and the residue of $l$ modulo $4$ and
takes the following values:
$$\begin{aligned}
Q_l(1/2) &\equiv& 1,3,7, 1 \pmod{8}\\
Q_l(5/6) &\equiv& 1,5,7, 7 \pmod{8}\\
\text{ resp. for } l&\equiv& 0,1,2, 3 \pmod{4}\text.
\end{aligned}$$
Therefore, when $l\ge3$ is odd, then
$6^l l! c_l = Q_l(1)-2Q_l(\tfrac{5}{6})+2Q_l(\tfrac{1}{2})-Q_l(\tfrac{1}{3})\equiv 4 \pmod{8}$,
and therefore cannot vanish. When $l\ge3$ is even,
then $6^l l! c_l = Q_l(1)+2Q_l(\tfrac{5}{6})+6Q_l(\tfrac{1}{2})+Q_l(\tfrac{1}{3})+2Q_l(0)\equiv 8 \pmod{16}$,
and again cannot vanish. This leaves
only the cases $c_0=12$, $c_1=0$, and $c_2=0$ to be calculated manually.
\end{proof}

\begin{lem}Let $K$ be a three-dimensional body in minimal-mean-width position. If $K$ is not a ball
then there is a body $K'$, isometric to $K$, such that
$\eta(K')<\tfrac{1}{2}w$, where $w$ is the mean width of $K$.\end{lem}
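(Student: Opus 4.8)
The plan is to average $\eta$ over the rotation group and feed the result into Lemma~\ref{lemcl}. Since translating $K$ changes $\eta$ only by $\tfrac1{12}\langle\text{vector},\sum_i\mb x_i\rangle=0$, it suffices to vary $K$ by rotations. For each $i$, as $R$ ranges over $SO(3)$ with normalized Haar measure the point $R^{-1}\mb x_i$ is uniformly distributed over $S^2$, so $\int_{SO(3)}h_K(R^{-1}\mb x_i)\,dR=\sigma(S^2)^{-1}\int_{S^2}h_K\,d\sigma=w/2$; averaging over $i$ gives $\int_{SO(3)}\eta(RK)\,dR=w/2$. As $R\mapsto\eta(RK)$ is continuous on the compact group $SO(3)$, it is either identically equal to $w/2$ or it attains a value strictly below $w/2$, in which case a minimizing rotation yields the required $K'$. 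It therefore suffices to show that, for a body $K$ in minimal-mean-width position, $\eta(RK)\equiv w/2$ forces $K$ to be a ball.

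To this end I would expand the support function into spherical harmonics, $h_K=\sum_{l\ge0}f_l$ with $f_l\in V_l$ the degree-$l$ part (so $f_0=w/2$). Choosing an orthonormal basis $(Y_{lm})$ of $V_l$, using $Y_{lm}(R\mb x)=\sum_{m'}D^l_{mm'}(R)Y_{lm'}(\mb x)$ (the matrix coefficients of the $SO(3)$-representation on $V_l$), the Schur orthogonality of the $D^l_{mm'}$ on $SO(3)$, and the addition theorem $\sum_mY_{lm}(\mb u)Y_{lm}(\mb v)=\tfrac{2l+1}{\sigma(S^2)}P_l(\langle\mb u,\mb v\rangle)$, one obtains an identity of the form
$$\int_{SO(3)}\bigl(\eta(RK)-\tfrac w2\bigr)^2dR=\frac{1}{144\,\sigma(S^2)}\sum_{l\ge1}M_l\,\|f_l\|^2,\qquad M_l:=\sum_{i,j=1}^{12}P_l\bigl(\langle\mb x_i,\mb x_j\rangle\bigr),$$
with $\|f_l\|$ the $L^2(S^2)$ norm. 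Each $M_l\ge0$, and $M_l=0$ exactly when the degree-$l$ harmonic $\mb v\mapsto\sum_{i=1}^{12}P_l(\langle\mb v,\mb x_i\rangle)$ is identically zero. For $l=1$ this is clear from $\sum_i\mb x_i=0$, and for $l=2$ from $\sum_i\langle\mb v,\mb x_i\rangle^2=4$ for every unit vector $\mb v$. For $l=0$ and every $l\ge3$, I would evaluate this harmonic at the contact point $\mb x_7$: the inner products $\langle\mb x_i,\mb x_7\rangle$, $i=1,\dots,12$, take the values $1,\tfrac12,0,-\tfrac13,-\tfrac12,-\tfrac56$ with multiplicities $1,4,2,1,2,2$, so the value there is exactly the $c_l$ of Lemma~\ref{lemcl}. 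By that lemma $c_l\neq0$ for all $l\ge3$, hence $M_l>0$ there, so $M_l=0\iff l\in\{1,2\}$ and the identity collapses to a sum over $l\ge3$.

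Finally I would use the minimal-mean-width hypothesis to dispose of the degree-two part. Decomposing $\langle\mb y,\mb x\rangle^2$ (in the variable $\mb x\in S^2$) into its degree-$0$ part $\tfrac13\|\mb y\|^2$ and a degree-$2$ harmonic $q_{\mb y}\in V_2$, the defining identity $\int_{S^2}h_K(\mb x)\langle\mb y,\mb x\rangle^2\,d\sigma(\mb x)=\tfrac w6\sigma(S^2)\|\mb y\|^2$ reduces, once the degree-$0$ contribution — which already equals the right-hand side — is subtracted, to $\int_{S^2}f_2\,q_{\mb y}\,d\sigma=0$ for all $\mb y$; since $\{q_{\mb y}\}$ spans $V_2$, this forces $f_2=0$. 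Hence, if $K$ is not a ball then $h_K\neq f_0+f_1$ — the latter being the support function of a translated ball — so some $f_l$ with $l\ge3$ is nonzero, whence $\int_{SO(3)}(\eta(RK)-\tfrac w2)^2dR\ge M_l\|f_l\|^2/(144\,\sigma(S^2))>0$ and $\eta(RK)$ is not identically $w/2$; the reduction in the first paragraph then produces $K'$.

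I expect the crux to be the appeal to Lemma~\ref{lemcl}: the entire argument rests on $c_l$ being nonzero for every $l\ge3$, which is the genuinely delicate number-theoretic fact proved there by reduction modulo $8$ and $16$. By comparison, once the averaging identity is arranged so that a single evaluation at $\mb x_7$ recovers $c_l$ — and the minimal-mean-width condition is used precisely to remove the one remaining degree, $l=2$, for which $c_l$ also vanishes — the rest is routine harmonic analysis on $S^2$ and $SO(3)$ together with the bookkeeping that reads off the twelve inner products $\langle\mb x_i,\mb x_7\rangle$.
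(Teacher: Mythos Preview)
Your argument is correct, and it rests on the same three ingredients as the paper's---averaging $\eta(RK)$ over rotations, the spherical-harmonic expansion of $h_K$, and Lemma~\ref{lemcl} combined with the minimal-mean-width condition to eliminate $l=2$---but the packaging is different. The paper averages only over the one-parameter family of rotations carrying a chosen $\mb y\in S^2$ to $\mb x_7$; this produces a function $g$ on $S^2$ that is the zonal convolution of $h_K$ with a measure whose Legendre coefficients are precisely the $c_l/12$, so $g$ is constant iff $c_l h_l=0$ for every $l\ge1$, and one concludes by noting that $g$, having mean $w/2$, must dip below $w/2$ if nonconstant. You instead integrate over all of $SO(3)$ and compute the variance of $\eta(RK)$ via Schur orthogonality, obtaining the quantity $\sum_{l\ge1}M_l\|f_l\|^2$ with $M_l=\sum_{i,j}P_l(\langle\mb x_i,\mb x_j\rangle)$; the link to $c_l$ is then recovered by evaluating the degree-$l$ harmonic $\sum_i P_l(\langle\cdot,\mb x_i\rangle)$ at $\mb x_7$. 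The paper's route is a little more direct in that the $c_l$ appear immediately as Fourier multipliers, whereas yours trades that for a clean nonnegative variance expression and makes the positivity $M_l\ge0$ manifest without needing a two-step ``below the average of an average'' argument.
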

\begin{proof}
Note that if $K'=R(K)$ is a rotation of $K$ about the origin, then $h_{K'}(\mb{x})=h_K(R^T\mb{x})$.
Let us pick a point $\mb{y}\in S^2$, and let $R_0$ be some rotation such that $R_0(\mb{y})=\mb{x}_7$.
Let $R_\theta$ be the rotation obtained by composing $R_0$ with a rotation by $\theta$ about
the axis through $\mb{x}_7$, so that $R_\theta(\mb{y})=\mb{x}_7$ for all $0\le\theta\le2\pi$.
Now let $g(\mb{y}) = (1/2\pi)\int_0^{2\pi}\eta(R_\theta(K))d\theta$, and repeat this
definition for all $\mb{y}\in S^2$.

The function $g(\mb{y})$ is given by integrating $h_K(\mb{x})$ over
a measure $\mu_\mb{y}(\mb{x})$. The measures $\mu_\mb{y}(\mb{x})$ are
each invariant under rotations about the axis through $\mb{y}$, and are
related to each other by rotations. Such an operation $h_K(\mb{x})\mapsto g(\mb{y})$
is known as a convolution by the zonal measure $\mu_\mb{p}(\mb{x})$,
where $\mb{p}$ is some arbitrary pole.
(see Ref. \cite{convolutions} for results about convolutions with
zonal measures). The measure $\mu_\mb{p}$ is given by
$$12 \mu_\mb{p}(\lbrace\mb{x}:\langle\mb p,\mb x\rangle \in (t_1,t_2)\rbrace) =
\left|\lbrace i\in\lbrace1,2,\ldots,12\rbrace : \langle\mb{x}_7,\mb{x}_i\rangle\in(t_1,t_2)\rbrace\right|\text.$$

We can expand $\mu_\mb{p}(\mb x)$ into spherical harmonics to
obtain $$\mu_\mb{p}(\mb{x}) = \tfrac{1}{12}\sum_{l=0}^{\infty} c_l P_l(\langle \mb{x},\mb{p}\rangle)\text,$$
where $c_l$ are the coefficients of Lemma \ref{lemcl}.
It follows that if $h_K(\mb{x}) = \sum_{l=0}^{\infty} h_l(\mb{x})$ is
the expansion of $h_K$ into spherical harmonics, then
$g(\mb{x}) = \tfrac{1}{12}\sum_{l=0}^{\infty} c_l h_l(\mb{x})$ \cite{convolutions}.
The $l=0$ term of $g(\mb{x})$, giving its average value, is equal
to that of $h_K(\mb{x})$, namely $w/2$. Because $K$ is in minimal-mean-width
position, $h_2=0$. Therefore, by Lemma \ref{lemcl}, $g(\mb{x})$
is constant if and only if the spherical harmonics expansion of $h_K$
terminates at $l=1$, which in turn is equivalent to $K$ being a ball.
Since we assume $K$ is not a ball, then $g(\mb{x})$ is not constant
and must achieve a value below its average. Since this value corresponds
in turn to an average of values of $\eta(R (K))$ over a set of
rotations $R$, it must be no smaller than the minimum value
among these rotations. Therefore, there is a rotation $R$
such that $\eta(R(K))<w/2$.
\end{proof}

We now prove Theorem \ref{b3thm}.

\begin{proof}Without loss of generality, we may assume that $\vll(K)=\vll(B)$
and that $K$ is rotated such that $\eta(K)<w/2$. Let $K_\lambda = (1-\lambda)B+\lambda K$, then
$\eta(K_\lambda) = 1+(\eta(K)-1)\lambda$. The isoperimetric inequality, $S(K)>S(B)$,
and Steiner's formula \eqref{steineq} give
$$\frac{\vll(K_\lambda)}{\vll(B)}\ge 1+3(\tfrac{w}{2}-1)\lambda(1-\lambda)^2\text.$$
The claim of the theorem now follows immediately from Lemma \ref{lemcns}.\end{proof}

\begin{cnj} 
The ball is a local minimum of $\delta_{L^*}$ in two dimensions.\end{cnj}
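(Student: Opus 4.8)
The plan is to carry out, in two dimensions, the kind of argument used for Theorem~\ref{b3thm}, working with the Kuperberg--Kuperberg identity $\delta_{L^*}(K)=A/2\Delta(K)$ from Section~2. Normalize the unperturbed body to be the unit disk $B$, so that $A(B)=\pi$, $\Delta(B)=\sqrt3$, and $\delta_{L^*}(B)=\pi/\sqrt{12}$, and describe a nearby convex body $K$ by its support function $h_K=1+f$ with $f$ small in $C^2$. By the affine invariance of $\delta_{L^*}$ we may assume that the Fourier modes of $f$ of orders $0$, $1$, and $2$ vanish, since these span the tangent space at $B$ to its orbit under the affine group (dilation, translation, and infinitesimal area-preserving linear deformations, respectively; infinitesimal rotation acts trivially on $B$). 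From $A(K)=\tfrac12\int_0^{2\pi}(h_K^2-(h_K')^2)\,d\theta$ one gets $A(K)=\pi-\tfrac{\pi}{2}\sum_{l\ge3}(l^2-1)(a_l^2+b_l^2)+O(\|f\|^3)$, where $a_l,b_l$ are the Fourier coefficients of $f$; so the area \emph{strictly decreases} to second order, and the entire content of the theorem is that $\Delta(K)$ decreases at least as fast.

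To control $\Delta(K)$, note first that every half-length parallelogram inscribed in $B$ is a $1\times\sqrt3$ rectangle: a parallelogram inscribed in a circle is a rectangle, and the half-length condition then fixes its shape once an orientation $\phi$ is chosen. These rectangles $\Pi_\phi$, $\phi\in[0,\pi)$, all have area $\sqrt3$, and for $K$ near $B$ there is a smooth one-parameter family $\Pi_\phi(K)$ obtained by taking, in each direction, the two half-length chords of $K$ lying near the two long sides of $\Pi_\phi$; for $\eps$ small, $\Delta(K)=\min_\phi A(\Pi_\phi(K))$. Write $A(\Pi_\phi(K))=\sqrt3+L_\phi(f)+Q_\phi(f)+O(\|f\|^3)$. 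By the rotational equivariance of the construction, $L_\phi(f)=\langle\kappa(\cdot-\phi),f\rangle$ for a fixed distribution $\kappa$ that is a finite combination of point evaluations and first derivatives --- supported at the four vertex-normals of the rectangle and at the two endpoints of the longest chord in direction $\phi$ --- so that $L_\phi(f)=\sum_l\hat\kappa_l(a_l\cos l\phi+b_l\sin l\phi)$ (schematically) with explicit coefficients $\hat\kappa_l$. Since $\int_0^{2\pi}L_\phi(f)\,d\phi=0$, whenever $L_\phi(f)\not\equiv0$ we have $\min_\phi L_\phi(f)\le -c\|f\|$, hence $\Delta(K)<\sqrt3$ by a full order in $\|f\|$, which by itself forces $\delta_{L^*}(K)>\delta_{L^*}(B)$ because the $O(\|f\|)$ loss in $\Delta$ overwhelms the $O(\|f\|^2)$ loss in $A$. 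Thus everything reduces to the kernel of the functionals $f\mapsto L_\phi(f)$ --- spanned by those harmonics $l$ with $\hat\kappa_l=0$ --- exactly as, in Theorem~\ref{heptthm}, the vanishing directions of the $\mb{f}_i$ were isolated via the fundamental theorem of linear algebra and then a residual positive-definite quadratic form finished the job.

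The catch is that this kernel is nontrivial for symmetry reasons. A perturbation $f$ supported on odd harmonics $l\ge3$ is exactly a perturbation of $B$ through convex bodies of constant width (since then $h_K(\psi)+h_K(\psi+\pi)\equiv2$), and for such $K$ the longest chord in every direction has length precisely $2$, so the half-length is pinned; this, together with the central symmetry of the rectangle, forces $\hat\kappa_l=0$ for every odd $l$. On this constant-width subspace $L_\phi\equiv0$, and one is left with the genuinely second-order claim that $\min_\phi Q_\phi(f)\le -\tfrac{\sqrt3}{2}\sum_{l\ge3}(l^2-1)(a_l^2+b_l^2)$ for every nonzero constant-width perturbation $f$ --- equivalently, that among convex bodies of constant width near the disk, $\Delta$ decreases strictly faster than $A$ does. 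I expect this to be the main obstacle: it requires second derivatives of the longest-chord length and of the four contact points, it is exactly the question of whether Reuleaux-type deformations raise or lower $\delta_{L^*}$, and there is no evident margin in the inequality; it is the two-dimensional counterpart of precisely the second-order difficulty that prevents Theorem~\ref{b3thm} from asserting a local minimum of $\delta$. One further housekeeping step is to verify, by a two-dimensional analogue of Lemma~\ref{lemcl} (an explicit trigonometric identity in $l$), that $\hat\kappa_l\ne0$ for all \emph{even} $l\ge4$, so that the constant-width directions are the only ones forcing the delicate second-order estimate.
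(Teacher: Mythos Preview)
This statement is presented in the paper as a \emph{conjecture}, not a theorem; the paper gives no proof of it, so there is no argument of the paper's against which to compare your attempt. Note too that ``two dimensions'' is almost certainly a typographical slip for ``three dimensions'': the conjecture sits in the section on the 3-ball, directly after Theorem~\ref{b3thm}, and the paragraph that follows it concerns tetrahedra and Ulam's conjecture, all three-dimensional.

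Taking the two-dimensional reading at face value, your proposal is in any case not a proof but a programme with an explicitly acknowledged gap. You correctly observe that the first-order variation of $\Delta(K)$ vanishes along constant-width perturbations (odd harmonics $l\ge3$), and you then state that the needed second-order inequality on that kernel is ``the main obstacle'' with ``no evident margin.'' That is precisely the kind of obstruction the paper itself runs into in three dimensions---Theorem~\ref{b3thm} establishes only positivity of one-sided directional derivatives, not a local minimum---and it is exactly why the author records the statement as a conjecture rather than a theorem. Until that second-order estimate (or something replacing it) is supplied, the argument remains incomplete.
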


It does not seem that the ball is a global minimum
of $\delta_{L^*}$. For example, the densest-known double-lattice packing
of the tetrahedron $T$ has a density of only $\tfrac{1}{369}(139+40\sqrt{10})=0.71948\ldots$,
so probably $\delta_{L^*}(T)<\delta_{L^*}(B)$ \cite{kallusdcg}. Still, if the conjecture
holds, then the ball would also be a local minimum of $\delta$, verifying
a local version of Ulam's conjecture.

\section{Discussion}

We conclude with a summary of known results and open problems.
Recall from the introduction that problems 1--3 ask for
bodies that minimize the functions $\delta$, $\delta_{L}$,
$\delta_{L^*}$, $\delta_{T}$, or $\delta_{T^*}$ either
among all convex bodies or among only \cs bodies.
There are only two case that are solved: the minimum
of $\delta_L$ and $\delta_T$ in two dimensions among
all convex bodies is
$2/3$, as realized by triangles alone \cite{Fary,Courant}.
A. Bezdek and W. Kuperberg comment that determining
the minima in the unsolved cases ``seems to be a very challenging
problem, perhaps too difficult to expect to be solved in
foreseeable future'' \cite{BezKup}. Determining local minima
seems to be a more approachable problem, and so far the following
local minima have been identified:

\begin{itemize}
\item Reinhardt's smoothed octagon is a local minimum of $\delta_{L}$
and of $\delta_{T}$ among \cs bodies in two dimensions \cite{Nazarov}.

\item The ball is a local minimum of $\delta_{L}$ and of $\delta_{T}$
among \cs bodies in three dimensions \cite{kalluspack}.

\item The regular heptagon is a local minimum of $\delta_{L^*}$
among all convex bodies in two dimensions (Section 2).
\end{itemize}

Note that the present work is the only case in the list above
of a local minimum among all convex bodies. Reinhardt's smoothed
octagon possesses the property that its lattice packing density
is achieved simultaneously by a one-parameter family of lattices (see Figure
\ref{packfig}).
In fact this property, common to all so-called irreducible
domains (domains all of whose proper subdomains have
admissible lattices of lower mean area), has long been a central
organizing idea in the study
of Reinhardt's conjecture \cite{mahler98}. Therefore, it might be surprising
to some that the heptagon, despite being irreducible
with respect to double lattices, does not have a one parameter
family of optimal admissible double lattices.

We end with three open problems:

\begin{itemize}
\item The conjecture that the ball is a global minimum of $\delta$ among
convex bodies in three dimensions has
been attributed to Ulam \cite{Gardner95}. A weaker claim,
that the ball is a local minimum of $\delta$, is open. It is also possible
that the ball is a local minimum of $\delta_{T^*}$ or of $\delta_{L^*}$,
as we conjecture here (Section 3). Either
of these possibilities necessarily imply that the ball is a local minimum of $\delta$,
but they do not necessarily follow from Ulam's conjecture.

\item The regular heptagon is conjectured to be a local minimum
of $\delta$ among convex bodies in two dimensions. This would
follow immediately if the packing density of the regular heptagon
is shown to be equal to its double-lattice packing density.

\item In four dimensions, it is known that the ball is not a minimum of $\delta_L$
among \cs bodies \cite{kalluspack}. It would be interesting to identify a body which
is such a minimum.
\end{itemize}

{\bf Acknowledgments.} I would like to thank Wlodzimierz Kuperberg for his helpful comments.

\bibliographystyle{amsplain}
\bibliography{worst.bib}

\begin{table}[b]
\resizebox{\textwidth}{!}{
\begin{tabular}{c|ccc}
& $\mb{f}_0$ & $\mb{u}_1$ &$\mb{u}_2$ \\\hline
$\langle \cdot, \mb{e}_{1}\rangle$ & $-\tfrac{2}{679}( 419 - 452 u + 40 u^2) $ & $ 0 $ & $ 0 $ \\
$\langle \cdot, \mb{e}_{2}\rangle$ & $0 $ & $ 0 $ & $ 0 $ \\
$\langle \cdot, \mb{e}_{3}\rangle$ & $\tfrac{1}{1358}( 587 -  148 u - 732 u^2) $ & $ \tfrac{1}{1609} (-1171 + 1296 u + 3652 u^2) v $ & $ \tfrac{1}{3218}    (2631 - 194 u - 60 u^2) $ \\
$\langle \cdot, \mb{e}_{4}\rangle$ & $ -\tfrac{4}{679}( -81 - 8 u +  199 u^2) v $ & $ \tfrac{1}{3218}    (2631 - 194 u - 60 u^2) $ & $ -\tfrac{1}{11263} (-13669 + 13864 u + 19084 u^2)    v $ \\
$\langle \cdot, \mb{e}_{5}\rangle$ & $ \tfrac{3}{679}( -39 + 68 u +  6 u^2) $ & $ 0 $ & $ 0 $ \\
$\langle \cdot, \mb{e}_{6}\rangle$ & $ \tfrac{6}{679}( 76 - 135 u + 48 u^2) v $ & $ 0 $ & $ 0 $ \\
$\langle \cdot, \mb{e}_{7}\rangle$ & $ \tfrac{1}{14}( 5 -  12 u + 8 u^2) $ & $ \tfrac{1}{1609} (-1911 + 2104 u + 4300 u^2) v $ & $ \tfrac{1}{3218} (-1241 - 454 u + 1452 u^2) $ \\
$\langle \cdot, \mb{e}_{8}\rangle$ & $ \tfrac{2}{97}( -46 + 23 u +  22 u^2) v $ & $ \tfrac{1}{3218} (-295 + 670 u - 1916 u^2) $ & $ -\tfrac{1}{11263} (-10219 + 6792 u +     19020 u^2) v $ \\
$\langle \cdot, \mb{e}_{9}\rangle$ & $ \tfrac{1}{14}( 5 - 12 u + 8 u^2) $ & $ -\tfrac{2}{1609} (-930 + 885 u +     2032 u^2)    v $ & $ \tfrac{1}{1609} (561 - 456 u + 622 u^2) $ \\
$\langle \cdot, \mb{e}_{10}\rangle$ & $ -\tfrac{2}{97}( -46 +  23 u + 22 u^2) v $ & $ -\tfrac{2}{1609} (-140 - 282 u +     427 u^2) $ & $ \tfrac{2}{11263} (-5336 + 6583 u + 7908 u^2) v $ \\
$\langle \cdot, \mb{e}_{11}\rangle$ &$ \tfrac{3}{679}( -39 + 68 u +  6 u^2) $ & $ 0 $ & $ 0 $ \\
$\langle \cdot, \mb{e}_{12}\rangle$ &$ -\tfrac{6}{679}( 76 - 135 u +  48 u^2) v $ & $ 0 $ & $ 0 $ \\
$\langle \cdot, \mb{e}_{13}\rangle$ &$ \tfrac{1}{1358}( 587 - 148 u -  732 u^2) $ & $ 0 $ & $ 1 $ \\
$\langle \cdot, \mb{e}_{14}\rangle$ &$ \tfrac{4}{679}( -81 - 8 u + 199 u^2) v $ & $ 1 $ & $ 0 $

\end{tabular}}
\caption{\label{tabfgen}The left column gives the elements of $\mb{f}_0$ in the standard basis of $\mathbb{R}^{14}$. The elements of $\mb{f}_i$
are obtained by a cyclic permutation of the indices by $2i$. The other two columns give the elements of vectors such that
$a_1\mb{u}_1+a_2\mb{u}_2$ is the general solution satisfying the equations \eqref{ns} and $x_0=x_2=x_5=y_0=y_2=y_5=0$.}
\end{table}

\begin{table}[p]
\resizebox{\textwidth}{!}{
\begin{tabular}{c|cc}
&$\mb{e}_1$&$\mb{e}_2$\\\hline
$\langle \cdot, F_0 \mb{e}_{1}\rangle$ & $ (8/461041) (194143 - 526054 u + 360624 u^2) $ &	$ 0 $ \\
$\langle \cdot, F_0 \mb{e}_{2}\rangle$ & $ 0 $ &	$ 0 $ \\
$\langle \cdot, F_0 \mb{e}_{3}\rangle$ & $ -(2/461041)    (72529 - 63570 u + 30298 u^2) $ &	$ -(4/49)( -4 - u + 10 u^2 )v $ \\
$\langle \cdot, F_0 \mb{e}_{4}\rangle$ & $ (4/   461041) (-60408 - 31391 u + 115516 u^2) v $ &	$ 1/7 $ \\
$\langle \cdot, F_0 \mb{e}_{5}\rangle$ & $ (2/461041)    (19669 - 35394 u + 47802 u^2) $ &	$ 0 $ \\
$\langle \cdot, F_0 \mb{e}_{6}\rangle$ & $ -(4/461041)    (88155 - 230075 u + 174752 u^2) v $ &	$ 0 $ \\
$\langle \cdot, F_0 \mb{e}_{7}\rangle$ & $ (1/   679) (-569 + 2564 u - 2136 u^2) $ &	$ (4/679) (31 - 258 u + 40 u^2)    v $ \\
$\langle \cdot, F_0 \mb{e}_{8}\rangle$ & $ (4/65863)    (33298 - 78729 u + 50406 u^2) v $ &	$ -(4/679)    (129 - 113 u + 10 u^2) $ \\
$\langle \cdot, F_0 \mb{e}_{9}\rangle$ & $ (1/   679) (-569 + 2564 u - 2136 u^2) $ &	$ -(4/679) (31 - 258 u + 40 u^2)    v $ \\
$\langle \cdot, F_0 \mb{e}_{10}\rangle$ & $ -(4/65863) (33298 - 78729 u + 50406 u^2)    v $ &	$ -(4/679) (129 - 113 u + 10 u^2) $ \\
$\langle \cdot, F_0 \mb{e}_{11}\rangle$ & $ (2/461041)    (19669 - 35394 u + 47802 u^2) $ &	$ 0 $ \\
$\langle \cdot, F_0 \mb{e}_{12}\rangle$ & $ (4/461041)    (88155 - 230075 u + 174752 u^2) v $ &	$ 0 $ \\
$\langle \cdot, F_0 \mb{e}_{13}\rangle$ & $ -(2/461041)    (72529 - 63570 u + 30298 u^2) $ &	$ (4/49) (-4 - u + 10 u^2) v $ \\
$\langle \cdot, F_0 \mb{e}_{14}\rangle$ & $ -(4/461041) (-60408 - 31391 u + 115516 u^2) v $ &	$ 1/7 $ \\\hline
&$\mb{e}_{3}$&$\mb{e}_{4}$\\\hline
$\langle \cdot, F_0 \mb{e}_{1}\rangle$ & $ -(2/461041)    (72529 - 63570 u + 30298 u^2) $ &	$ (4/   461041) (-60408 - 31391 u + 115516 u^2) v $ \\
$\langle \cdot, F_0 \mb{e}_{2}\rangle$ & $ -(4/49) (-4 - u + 10 u^2) v $ &	$ 1/   7 $ \\
$\langle \cdot, F_0 \mb{e}_{3}\rangle$ & $ (1/   461041) (-20089 - 45197 u - 24338 u^2) $ &	$ -(1/461041)    (15943 + 262998 u + 380160 u^2) v $ \\
$\langle \cdot, F_0 \mb{e}_{4}\rangle$ & $ -(1/461041)    (15943 + 262998 u + 380160 u^2) v $ &	$ (1/   65863) (-23109 - 25645 u + 18926 u^2) $ \\
$\langle \cdot, F_0 \mb{e}_{5}\rangle$ & $ (1/461041)    (759 + 107533 u + 100614 u^2) $ &	$ (3/461041)    (8985 + 89258 u + 119032 u^2) v $ \\
$\langle \cdot, F_0 \mb{e}_{6}\rangle$ & $ (1/   461041) (-149167 - 245274 u + 634392 u^2) v $ &	$ (1/65863)    (20449 + 1367 u - 33410 u^2) $ \\
$\langle \cdot, F_0 \mb{e}_{7}\rangle$ & $ (1/9506) (-1347 + 2080 u + 13444 u^2) $ &	$ (2/   4753) (-439 + 1432 u + 3082 u^2) v $ \\
$\langle \cdot, F_0 \mb{e}_{8}\rangle$ & $ (2/65863) (-7673 + 14458 u + 17178 u^2)    v $ &	$ (1/131726) (66455 + 40832 u - 105836 u^2) $ \\
$\langle \cdot, F_0 \mb{e}_{9}\rangle$ & $ -(3/9506) (-587 + 2088 u + 3060 u^2) $ &	$ -(2/4753) (-583 +     1116 u + 3134 u^2) v $ \\
$\langle \cdot, F_0 \mb{e}_{10}\rangle$ & $ (2/65863)    (7479 + 13866 u + 26278 u^2) v $ &	$ (1/131726)    (65465 + 43752 u - 88164 u^2) $ \\
$\langle \cdot, F_0 \mb{e}_{11}\rangle$ & $ (1/   922082) (-166195 + 68402 u + 95304 u^2) $ &	$ (1/   461041) (-96429 - 46894 u + 182108 u^2) v $ \\
$\langle \cdot, F_0 \mb{e}_{12}\rangle$ & $ -(3/461041)    (41199 - 45674 u + 5436 u^2) v $ &	$ (3/   131726) (-14829 - 8154 u + 26800 u^2) $ \\
$\langle \cdot, F_0 \mb{e}_{13}\rangle$ & $ (1/   461041)    (28120 + 29493 u - 21622 u^2) $ &	$ -(1/461041) (-79117 - 84650 u +     70536 u^2) v $ \\
$\langle \cdot, F_0 \mb{e}_{14}\rangle$ & $ (1/   461041) (-79117 - 84650 u + 70536 u^2) v $ &	$ (1/   65863) (-21996 - 12961 u + 35782 u^2) $ \\\hline
&$\mb{e}_{5}$&$\mb{e}_{6}$\\\hline
$\langle \cdot, F_0 \mb{e}_{1}\rangle$ & $ (2/461041)    (19669 - 35394 u + 47802 u^2) $ &	$ -(4/461041)    (88155 - 230075 u + 174752 u^2) v $ \\
$\langle \cdot, F_0 \mb{e}_{2}\rangle$ & $ 0 $ &	$ 0 $ \\
$\langle \cdot, F_0 \mb{e}_{3}\rangle$ & $ (1/461041)    (759 + 107533 u + 100614 u^2) $ &	$ (1/   461041) (-149167 - 245274 u + 634392 u^2) v $ \\
$\langle \cdot, F_0 \mb{e}_{4}\rangle$ & $ (3/461041)    (8985 + 89258 u + 119032 u^2) v $ &	$ (1/65863)    (20449 + 1367 u - 33410 u^2) $ \\
$\langle \cdot, F_0 \mb{e}_{5}\rangle$ & $ (1/922082)    (213769 - 567974 u - 163748 u^2) $ &	$ -(3/461041)    (68953 - 192732 u + 128460 u^2) v $ \\
$\langle \cdot, F_0 \mb{e}_{6}\rangle$ & $ -(3/461041)    (68953 - 192732 u + 128460 u^2) v $ &	$ (1/18818)    (12903 - 44330 u + 35636 u^2) $ \\
$\langle \cdot, F_0 \mb{e}_{7}\rangle$ & $ (1/1358)    (245 - 656 u - 948 u^2) $ &	$ (4/679) (109 - 200 u + 125 u^2) v $ \\
$\langle \cdot, F_0 \mb{e}_{8}\rangle$ & $ -(4/461041)    (61800 - 34586 u + 543 u^2) v $ &	$ (1/   131726) (-41951 + 80776 u - 28660 u^2) $ \\
$\langle \cdot, F_0 \mb{e}_{9}\rangle$ & $ (2/679) (-102 + 342 u + 53 u^2) $ &	$ (6/4753)    (320 - 1033 u + 488 u^2) v $ \\
$\langle \cdot, F_0 \mb{e}_{10}\rangle$ & $ -(2/65863)    (13272 - 6927 u + 26312 u^2) v $ &	$ (30/65863)    (662 - 622 u + 25 u^2) $ \\
$\langle \cdot, F_0 \mb{e}_{11}\rangle$ & $ (3/922082)    (47265 - 111466 u + 50436 u^2) $ &	$ -(3/461041)    (68953 - 192732 u + 128460 u^2) v $ \\
$\langle \cdot, F_0 \mb{e}_{12}\rangle$ & $ (3/461041)    (68953 - 192732 u + 128460 u^2) v $ &	$ -(9/131726)    (11911 - 36462 u + 25820 u^2) $ \\
$\langle \cdot, F_0 \mb{e}_{13}\rangle$ & $ (1/922082) (-166195 + 68402 u + 95304 u^2) $ &	$ (3/   461041) (41199 - 45674 u + 5436 u^2)    v $ \\
$\langle \cdot, F_0 \mb{e}_{14}\rangle$ & $ -(1/461041) (-96429 -     46894 u + 182108 u^2) v $ &	$ (3/   131726) (-14829 - 8154 u + 26800 u^2) $ \\\hline
&$\mb{e}_7$&$\mb{e}_{8}$\\\hline
$\langle \cdot, F_0 \mb{e}_{1}\rangle$ & $ (1/   679) (-569 + 2564 u - 2136 u^2) $ &	$ (4/65863)    (33298 - 78729 u + 50406 u^2) v $ \\
$\langle \cdot, F_0 \mb{e}_{2}\rangle$ & $ (4/679) (31 - 258 u + 40 u^2) v $ &	$ -(4/679)    (129 - 113 u + 10 u^2) $ \\
$\langle \cdot, F_0 \mb{e}_{3}\rangle$ & $ (1/   9506) (-1347 + 2080 u + 13444 u^2) $ &	$ (2/   65863) (-7673 + 14458 u + 17178 u^2)    v $ \\
$\langle \cdot, F_0 \mb{e}_{4}\rangle$ & $ (2/4753) (-439 + 1432 u + 3082 u^2)    v $ &	$ (1/131726)    (66455 + 40832 u - 105836 u^2) $ \\
$\langle \cdot, F_0 \mb{e}_{5}\rangle$ & $ (1/1358)    (245 - 656 u - 948 u^2) $ &	$ -(4/461041) (61800 - 34586 u + 543 u^2)    v $ \\
$\langle \cdot, F_0 \mb{e}_{6}\rangle$ & $ (4/679) (109 - 200 u + 125 u^2)    v $ &	$ (1/131726) (-41951 + 80776 u - 28660 u^2) $ \\
$\langle \cdot, F_0 \mb{e}_{7}\rangle$ & $ (1/9506)    (4441 - 13812 u - 8488 u^2) $ &	$ -(6/679) (-27 - 229 u + 422 u^2)    v $ \\
$\langle \cdot, F_0 \mb{e}_{8}\rangle$ & $ -(6/679) (-27 - 229 u +     422 u^2) v $ &	$ (1/131726)    (120805 - 361636 u + 219960 u^2) $ \\
$\langle \cdot, F_0 \mb{e}_{9}\rangle$ & $ (1/9506)    (857 - 14736 u + 33848 u^2) $ &	$ -(2/4753) (1709 - 5171 u + 722 u^2)    v $ \\
$\langle \cdot, F_0 \mb{e}_{10}\rangle$ & $ (2/4753) (1709 - 5171 u + 722 u^2) v $ &	$ (1/   131726) (-103345 + 33000 u + 84232 u^2) $ \\
$\langle \cdot, F_0 \mb{e}_{11}\rangle$ & $ (2/   679) (-102 + 342 u + 53 u^2) $ &	$ (2/65863) (13272 - 6927 u + 26312 u^2)    v $ \\
$\langle \cdot, F_0 \mb{e}_{12}\rangle$ & $ -(6/4753) (320 - 1033 u + 488 u^2)    v $ &	$ (30/65863) (662 - 622 u + 25 u^2) $ \\
$\langle \cdot, F_0 \mb{e}_{13}\rangle$ & $ -(3/9506) (-587 + 2088 u + 3060 u^2) $ &	$ -(2/65863)    (7479 + 13866 u + 26278 u^2) v $ \\
$\langle \cdot, F_0 \mb{e}_{14}\rangle$ & $ (2/   4753) (-583 + 1116 u + 3134 u^2) v $ &	$ (1/131726)    (65465 + 43752 u - 88164 u^2) $

\end{tabular}}
\caption{\label{tabFF1}Elements of $F_0$ in the standard basis of $\mathbb{R}^{14}$. The elements of $F_i$
are obtained by a cyclic permutation of the indices by $2i$ (cont. next page).}
\end{table}

\addtocounter{table}{-1}

\begin{table}[p]
\resizebox{\textwidth}{!}{
\begin{tabular}{c|cc}
&$\mb{e}_9$&$\mb{e}_{10}$\\\hline
$\langle \cdot, F_0 \mb{e}_{1}\rangle$ & $ (1/   679) (-569 + 2564 u - 2136 u^2) $ &	$ -(4/65863)    (33298 - 78729 u + 50406 u^2) v $ \\
$\langle \cdot, F_0 \mb{e}_{2}\rangle$ & $ -(4/679) (31 - 258 u + 40 u^2)    v $ &	$ -(4/679) (129 - 113 u + 10 u^2) $ \\
$\langle \cdot, F_0 \mb{e}_{3}\rangle$ & $ -(3/9506) (-587 + 2088 u + 3060 u^2) $ &	$ (2/65863)    (7479 + 13866 u + 26278 u^2) v $ \\
$\langle \cdot, F_0 \mb{e}_{4}\rangle$ & $ -(2/4753) (-583 + 1116 u +     3134 u^2) v $ &	$ (1/131726) (65465 + 43752 u - 88164 u^2) $ \\
$\langle \cdot, F_0 \mb{e}_{5}\rangle$ & $ (2/679) (-102 + 342 u + 53 u^2) $ &	$ -(2/65863)    (13272 - 6927 u + 26312 u^2) v $ \\
$\langle \cdot, F_0 \mb{e}_{6}\rangle$ & $ (6/4753)    (320 - 1033 u + 488 u^2) v $ &	$ (30/65863)    (662 - 622 u + 25 u^2) $ \\
$\langle \cdot, F_0 \mb{e}_{7}\rangle$ & $ (1/9506) (857 - 14736 u + 33848 u^2) $ &	$ (2/4753)    (1709 - 5171 u + 722 u^2) v $ \\
$\langle \cdot, F_0 \mb{e}_{8}\rangle$ & $ -(2/4753)    (1709 - 5171 u + 722 u^2) v $ &	$ (1/   131726) (-103345 + 33000 u + 84232 u^2) $ \\
$\langle \cdot, F_0 \mb{e}_{9}\rangle$ & $ (1/9506) (4441 - 13812 u - 8488 u^2) $ &	$ (6/   679) (-27 - 229 u + 422 u^2) v $ \\
$\langle \cdot, F_0 \mb{e}_{10}\rangle$ & $ (6/   679) (-27 - 229 u + 422 u^2) v $ &	$ (1/131726)    (120805 - 361636 u + 219960 u^2) $ \\
$\langle \cdot, F_0 \mb{e}_{11}\rangle$ & $ (1/1358) (245 - 656 u - 948 u^2) $ &	$ (4/461041)    (61800 - 34586 u + 543 u^2) v $ \\
$\langle \cdot, F_0 \mb{e}_{12}\rangle$ & $ -(4/679)    (109 - 200 u + 125 u^2) v $ &	$ (1/   131726) (-41951 + 80776 u - 28660 u^2) $ \\
$\langle \cdot, F_0 \mb{e}_{13}\rangle$ & $ (1/   9506) (-1347 + 2080 u + 13444 u^2) $ &	$ -(2/65863) (-7673 + 14458 u +     17178 u^2) v $ \\
$\langle \cdot, F_0 \mb{e}_{14}\rangle$ & $ -(2/4753) (-439 + 1432 u + 3082 u^2)    v $ &	$ (1/131726) (66455 + 40832 u - 105836 u^2) $ \\\hline
&$\mb{e}_{11}$&$\mb{e}_{12}$\\\hline
$\langle \cdot, F_0 \mb{e}_{1}\rangle$ & $ (2/461041)    (19669 - 35394 u + 47802 u^2) $ &	$ (4/461041)    (88155 - 230075 u + 174752 u^2) v $ \\
$\langle \cdot, F_0 \mb{e}_{2}\rangle$ & $ 0 $ &	$ 0 $ \\
$\langle \cdot, F_0 \mb{e}_{3}\rangle$ & $ (1/   922082) (-166195 + 68402 u + 95304 u^2) $ &	$ -(3/461041)    (41199 - 45674 u + 5436 u^2) v $ \\
$\langle \cdot, F_0 \mb{e}_{4}\rangle$ & $ (1/   461041) (-96429 - 46894 u + 182108 u^2) v $ &	$ (3/   131726) (-14829 - 8154 u + 26800 u^2) $ \\
$\langle \cdot, F_0 \mb{e}_{5}\rangle$ & $ (3/922082)    (47265 - 111466 u + 50436 u^2) $ &	$ (3/461041)    (68953 - 192732 u + 128460 u^2) v $ \\
$\langle \cdot, F_0 \mb{e}_{6}\rangle$ & $ -(3/461041) (68953 - 192732 u + 128460 u^2)    v $ &	$ -(9/131726) (11911 - 36462 u + 25820 u^2) $ \\
$\langle \cdot, F_0 \mb{e}_{7}\rangle$ & $ (2/   679) (-102 + 342 u + 53 u^2) $ &	$ -(6/4753) (320 - 1033 u + 488 u^2)    v $ \\
$\langle \cdot, F_0 \mb{e}_{8}\rangle$ & $ (2/65863)    (13272 - 6927 u + 26312 u^2) v $ &	$ (30/65863)    (662 - 622 u + 25 u^2) $ \\
$\langle \cdot, F_0 \mb{e}_{9}\rangle$ & $ (1/1358)    (245 - 656 u - 948 u^2) $ &	$ -(4/679) (109 - 200 u + 125 u^2) v $ \\
$\langle \cdot, F_0 \mb{e}_{10}\rangle$ & $ (4/461041)    (61800 - 34586 u + 543 u^2) v $ &	$ (1/   131726) (-41951 + 80776 u - 28660 u^2) $ \\
$\langle \cdot, F_0 \mb{e}_{11}\rangle$ & $ (1/922082)    (213769 - 567974 u - 163748 u^2) $ &	$ (3/461041)    (68953 - 192732 u + 128460 u^2) v $ \\
$\langle \cdot, F_0 \mb{e}_{12}\rangle$ & $ (3/461041)    (68953 - 192732 u + 128460 u^2) v $ &	$ (1/18818)    (12903 - 44330 u + 35636 u^2) $ \\
$\langle \cdot, F_0 \mb{e}_{13}\rangle$ & $ (1/461041)    (759 + 107533 u + 100614 u^2) $ &	$ -(1/461041) (-149167 - 245274 u +     634392 u^2) v $ \\
$\langle \cdot, F_0 \mb{e}_{14}\rangle$ & $ -(3/461041)    (8985 + 89258 u + 119032 u^2) v $ &	$ (1/65863)    (20449 + 1367 u - 33410 u^2) $ \\\hline
&$\mb{e}_{13}$&$\mb{e}_{14}$\\\hline
$\langle \cdot, F_0 \mb{e}_{1}\rangle$ & $ -(2/461041)    (72529 - 63570 u + 30298 u^2) $ &	$ -(4/461041) (-60408 - 31391 u +     115516 u^2) v $ \\
$\langle \cdot, F_0 \mb{e}_{2}\rangle$ & $ (4/   49) (-4 - u + 10 u^2) v $ &	$ (1/7) $ \\
$\langle \cdot, F_0 \mb{e}_{3}\rangle$ & $ (1/461041)    (28120 + 29493 u - 21622 u^2) $ &	$ (1/   461041) (-79117 - 84650 u + 70536 u^2) v $ \\
$\langle \cdot, F_0 \mb{e}_{4}\rangle$ & $ -(1/461041) (-79117 -     84650 u + 70536 u^2) v $ &	$ (1/   65863) (-21996 - 12961 u + 35782 u^2) $ \\
$\langle \cdot, F_0 \mb{e}_{5}\rangle$ & $ (1/   922082) (-166195 + 68402 u + 95304 u^2) $ &	$ -(1/461041) (-96429 -     46894 u + 182108 u^2) v $ \\
$\langle \cdot, F_0 \mb{e}_{6}\rangle$ & $ (3/461041)    (41199 - 45674 u + 5436 u^2) v $ &	$ (3/   131726) (-14829 - 8154 u + 26800 u^2) $ \\
$\langle \cdot, F_0 \mb{e}_{7}\rangle$ & $ -(3/9506) (-587 + 2088 u + 3060 u^2) $ &	$ (2/   4753) (-583 + 1116 u + 3134 u^2) v $ \\
$\langle \cdot, F_0 \mb{e}_{8}\rangle$ & $ -(2/65863) (7479 + 13866 u + 26278 u^2)    v $ &	$ (1/131726) (65465 + 43752 u - 88164 u^2) $ \\
$\langle \cdot, F_0 \mb{e}_{9}\rangle$ & $ (1/   9506) (-1347 + 2080 u + 13444 u^2) $ &	$ -(2/4753) (-439 + 1432 u +     3082 u^2) v $ \\
$\langle \cdot, F_0 \mb{e}_{10}\rangle$ & $ -(2/65863) (-7673 +     14458 u + 17178 u^2) v $ &	$ (1/131726)    (66455 + 40832 u - 105836 u^2) $ \\
$\langle \cdot, F_0 \mb{e}_{11}\rangle$ & $ (1/461041)    (759 + 107533 u + 100614 u^2) $ &	$ -(3/461041)    (8985 + 89258 u + 119032 u^2) v $ \\
$\langle \cdot, F_0 \mb{e}_{12}\rangle$ & $ -(1/461041) (-149167 - 245274 u +     634392 u^2) v $ &	$ (1/65863)    (20449 + 1367 u - 33410 u^2) $ \\
$\langle \cdot, F_0 \mb{e}_{13}\rangle$ & $ (1/461041) (-20089 - 45197 u - 24338 u^2) $ &	$ (1/   461041) (15943 + 262998 u + 380160 u^2)    v $ \\
$\langle \cdot, F_0 \mb{e}_{14}\rangle$ & $ (1/461041)    (15943 + 262998 u + 380160 u^2) v $ &	$ (1/   65863) (-23109 - 25645 u + 18926 u^2) $

\end{tabular}}
\caption{(cont.) Elements of $F_0$ in the standard basis of $\mathbb{R}^{14}$. The elements of $F_i$
are obtained by a cyclic permutation of the indices by $2i$.}
\end{table}


\begin{table}[p]
\resizebox{\textwidth}{!}{
\begin{tabular}{rll}
$\langle\mb{f}_0',\mb{e}_i\rangle = $ & $\tfrac{2}{679}(128 - 743 u + 816 u^2)$ & for $i=1$ \\
$\langle\mb{f}_0',\mb{e}_i\rangle = $ & $0$ & for $i=2,4,6,8,10,12,14$ \\
$\langle\mb{f}_0',\mb{e}_i\rangle = $ & $\tfrac{2}{7}(-4 - 3 u + 8 u^2)$ & for $i=3,5,7,9,11,13$ \\
$\langle\mb{f}_0',\mb{e}_i\rangle = $ & $\tfrac{2}{7}(-2 - u + 4 u^2)v$ & for $i=15,18,22,23,26,27,30,31,34,35,39,42$ \\
$\langle\mb{f}_0',\mb{e}_i\rangle = $ & $\tfrac{2}{7}(-5 - 3 u + 10 u^2)v$ & for $i=16,17,20,21,25,28,29,32,36,37,40,41$ \\
$\langle\mb{f}_0',\mb{e}_i\rangle = $ & $\tfrac{2}{679}(-618 - 273 u + 692 u^2)v$ & for $i=19,38$ \\
$\langle\mb{f}_0',\mb{e}_i\rangle = $ & $\tfrac{2}{679}(-649 - 1179 u + 2010 u^2)v$ & for $i=24,33$ \\
$\langle\mb{g}_1',\mb{e}_1\rangle = $ & $\langle\mb{g}_8',\mb{e}_1\rangle = $ & $-1-u$ \\
$\langle\mb{g}_1',\mb{e}_2\rangle = $ & $-\langle\mb{g}_8',\mb{e}_2\rangle = $ & $\tfrac{1}{8} (-15 - 2 u + 20 u^2)$ \\
$\langle\mb{g}_1',\mb{e}_{28}\rangle = $ & $\langle\mb{g}_8',\mb{e}_{29}\rangle = $ & $-2(1+u)v$ \\
$\langle\mb{g}_{15}',\mb{e}_1\rangle = $ & $\langle\mb{g}_{22}',\mb{e}_1\rangle = $ & $\tfrac{3}{2}(-3+4u^2)v$ \\
$\langle\mb{g}_{15}',\mb{e}_{27}\rangle = $ & $\langle\mb{g}_{22}',\mb{e}_{30}\rangle = $ & $\tfrac{1}{4}(-15-2u+20u^2)$ \\
$\langle\mb{g}_{15}',\mb{e}_{28}\rangle = $ & $\langle\mb{g}_{22}',\mb{e}_{29}\rangle = $ & $\tfrac{1}{2}(-9-2u+14u^2)$ \\
$\langle\mb{g}_{29}',\mb{e}_{27}\rangle = $ & $\langle\mb{g}_{36}',\mb{e}_{30}\rangle = $ & $\tfrac{1}{4}(-7-2u+12u^2)$ \\
$\langle\mb{g}_{29}',\mb{e}_{28}\rangle = $ & $\langle\mb{g}_{36}',\mb{e}_{29}\rangle = $ & $\tfrac{1}{2}(-13-2u+18u^2)$ 

\end{tabular}}
\caption{\label{tabfgp}Elements of $\mb{f}_0'$, $\mb{g}_1'$, $\mb{g}_8'$, $\mb{g}_{15}'$, $\mb{g}_{22}'$, $\mb{g}_{29}'$,
and $\mb{g}_{36}'$ in the
standard basis of $\mathbb{R}^{14}$. Elements not given explicitly are zero. The elements of $\mb{f}_i'$ and $\mb{g}_i'$ for other
values of $i$ are obtained by appropriate permutation of the indices (for example, to obtain $\mb{g}_{i+i'}'$ from $\mb{g}_i'$,
cycle the first 14 coordinates by $2i'$ and the last 28 by $4i'$).}
\end{table}

\end{document}